\UseRawInputEncoding
\documentclass[12pt]{article}
\usepackage{amsmath,amssymb,amsthm}
\usepackage{graphicx}
\usepackage{subfigure}
\usepackage{amsmath,amsfonts,amssymb,amscd}
\usepackage{indentfirst,graphicx,epstopdf}
\usepackage{graphicx,psfrag,ifpdf,enumerate}
\usepackage{caption}
\usepackage{amsthm, amsfonts, color}
\usepackage[bookmarksnumbered, plainpages]{hyperref}
\usepackage[numbers,sort&compress]{natbib}
\usepackage[noend]{algpseudocode}
\usepackage{algorithmicx,algorithm}
\usepackage{psfrag}
\usepackage{color}
\usepackage{enumerate}
\usepackage{epstopdf}
\input{epsf}

\newcommand\tabcaption{\def\@captype{table}\caption}

\textheight=22cm \textwidth=16cm \topmargin=-0.1cm
\oddsidemargin=0cm \evensidemargin=0cm

\newtheorem{theorem}{Theorem}

\newtheorem{lemma}{Lemma}

\newtheorem{prop}[theorem]{Proposition}
\theoremstyle{definition}
\newtheorem{rem}{Remark}
\newtheorem{prob}{Problem}

\newtheorem{exam}{Example}

\textheight=22cm \textwidth=16cm \topmargin=-0.1cm
\oddsidemargin=0cm \evensidemargin=0cm

\begin{document}
\title{\Large \bf Counting rainbow triangles in edge-colored graphs}

\author{Xueliang Li\thanks{Center for Combinatorics and LPMC,
Nankai University, Tianjin 300071, China.
Email: lxl@nankai.edu.cn.},~~Bo Ning\thanks{Corresponding author. College of Cyber Science,
Nankai University, Tianjin 300350, China.
Email: bo.ning@nankai.edu.cn.},~~Yongtang Shi\thanks{Center for Combinatorics and LPMC,
Nankai University, Tianjin 300071, China. Email: shi@nankai.edu.cn.},~~Shenggui
Zhang\thanks{$^{a}$School of Mathematics and Statistics,
Northwestern Polytechnical University, Xi'an, Shaanxi 710129, China;
$^{b}$Xi'an-Budapest Joint Research Center for Combinatorics,
Northwestern Polytechnical University, Xi'an, Shaanxi 710129, China.
Email: sgzhang@nwpu.edu.cn.}}
\maketitle
\begin{abstract}
Let $G$ be an edge-colored graph on $n$ vertices.
The minimum color degree of $G$, denoted by $\delta^c(G)$,
is defined as the minimum number of colors assigned to
the edges incident to a vertex in $G$. In 2013,
H. Li proved that an edge-colored graph $G$
on $n$ vertices contains a rainbow triangle if $\delta^c(G)\geq \frac{n+1}{2}$.
In this paper, we obtain several estimates on the number of rainbow triangles
through one given vertex in $G$. As consequences,
we prove counting results for rainbow triangles
in edge-colored graphs. One main theorem states that
the number of rainbow triangles in $G$ is at least
$\frac{1}{6}\delta^c(G)(2\delta^c(G)-n)n$,
which is best possible by considering the rainbow $k$-partite
Tur\'an graph, where its order is divisible by $k$.
This means that there are $\Omega(n^2)$
rainbow triangles in $G$ if $\delta^c(G)\geq \frac{n+1}{2}$,
and $\Omega(n^3)$ rainbow triangles in $G$ if
$\delta^c(G)\geq cn$ when $c>\frac{1}{2}$. Both results are tight
in sense of the order of the magnitude. We also prove a counting
version of a previous theorem on rainbow triangles
under a color neighborhood union condition due to Broersma et al., and
an asymptotically tight color degree condition forcing a colored friendship subgraph $F_k$ (i.e.,
$k$ rainbow triangles sharing a common vertex).
\end{abstract}

\section{Introduction}\label{Sec:1}
Throughout this paper, we only consider finite undirected simple graphs.
Let $G$ be a graph. By an edge-coloring of $G$, we mean a function
$C: E\rightarrow \mathbb{N}$, where $\mathbb{N}$ is the set of non-negative
integers. If $G$ has such an edge-coloring, we call $G$ an edge-colored
graph and denote it by $(G,C)$. For a vertex $v\in V(G)$, the color neighborhood $CN_G(v)$
is defined as the set $\{C(e): {e~is~incident~with~v}\}$,
and the \emph{color degree} of $v$ is denoted by $d_G^c(v):=|CN_G(v)|$.
We denote by $\delta^c(G):=\min\{d_G^c(v):v\in V(G)\}$,
and by $c(G)$ the number of colors appearing on $E(G)$.
Let $\overline{\sigma}^c_2(G)=\min\{d^c(x)+d^c(y):xy\in E(G)\}.$
For a vertex $v\in V(G)$, the \emph{monochromatic degree} of $v$ (in $G$),
denoted by $d_G^{mon}(v)$, is defined as the maximum number
of edges incident to $v$ colored with a same color.
A subgraph $H$ of $G$ is called \emph{properly-colored}
if every two incident edges are assigned with different
colors, and is called \emph{rainbow}
if all of its edges have distinct colors.
When there is no possibility of confusion, we will drop the subscript
$G$. For example, we use $\delta^c$ instead of $\delta^c(G)$.
For notation and terminology not defined here, we refer to Bondy
and Murty \cite{BM08}.

Rainbow and properly-colored subgraph problems have received much attention from graph theorists,
see \cite{BLWZ05,CKRY16,FLZ18,ADH19,CMNO21}. For surveys, see \cite{KL08,C18}.
In 2013, H. Li \cite{L13} proved a minimum color degree condition for rainbow triangles, which was
conjectured in \cite{LW12}.
\begin{theorem}[\cite{L13}]\label{Thm:Li}
Let $(G,C)$ be an edge-colored graph on $n\geq 3$ vertices. If $\delta^c(G)\geq \frac{n+1}{2}$
then $G$ contains a rainbow triangle.
\end{theorem}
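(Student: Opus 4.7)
The plan is to argue by contradiction: assume $(G,C)$ contains no rainbow triangle. Fix an arbitrary edge $uv\in E(G)$ and write $c_0=C(uv)$; the goal is to exhibit a common neighbor $w$ of $u$ and $v$ such that the three colors $c_0$, $C(uw)$, $C(vw)$ are pairwise distinct, contradicting the assumption.

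To set this up, at each endpoint I would choose a \emph{rainbow representative set} $R_u\subseteq N_G(u)$ of size $d^c(u)$ that contains exactly one vertex from each color class of edges at $u$; the choice is made with the convention that $v$ is the representative of its own color class at $u$ (so $v\in R_u$), and symmetrically $u\in R_v$. Setting $R'_u=R_u\setminus\{v\}$ and $R'_v=R_v\setminus\{u\}$, both lie inside the $(n-2)$-element set $V(G)\setminus\{u,v\}$, both have size at least $\tfrac{n+1}{2}-1=\tfrac{n-1}{2}$, and by construction no edge from $u$ to $R'_u$ or from $v$ to $R'_v$ carries color $c_0$. A single application of inclusion--exclusion then gives $|R'_u\cap R'_v|\ge 2\cdot\tfrac{n-1}{2}-(n-2)=1$.

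For every $w\in R'_u\cap R'_v$, the triangle $uvw$ has colors $c_0$, $C(uw)$, $C(vw)$ with $C(uw)\ne c_0$ and $C(vw)\ne c_0$; it is therefore rainbow iff $C(uw)\ne C(vw)$. If any common representative achieves this, the proof is complete, so the remaining task is to rule out the pathological case in which, for every admissible pair of representative sets, every vertex $w$ in $R'_u\cap R'_v$ satisfies $C(uw)=C(vw)$. The main obstacle sits precisely here: I would exploit the freedom to swap the representative of a color class at $u$ (or at $v$) among all edges of that class, and couple this with a second round of degree counting to show that such a uniform palette-collision between $u$ and $v$ is inconsistent with $\delta^c\ge\tfrac{n+1}{2}$ --- for instance, by double counting pairs $(w,c)$ with $c\in CN_G(u)\cap CN_G(v)$ realized simultaneously at a common neighbor $w$, and arguing that the resulting count, together with the no-rainbow-triangle assumption, forces some vertex to have color degree strictly less than $\tfrac{n+1}{2}$. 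Turning this swapping/counting step into a clean contradiction is the delicate core of the argument.
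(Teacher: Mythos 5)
Your proposal stops exactly where the real difficulty of Li's theorem begins, so as it stands it has a genuine gap. The set-up is fine: with $d^c(u),d^c(v)\geq\frac{n+1}{2}$ the representative sets satisfy $|R'_u\cap R'_v|\geq 2\cdot\frac{n-1}{2}-(n-2)=1$, but that is \emph{all} you get --- a single guaranteed common representative $w$, and nothing prevents $C(uw)=C(vw)$ for that one vertex. The repair you gesture at (re-choosing the representative of the offending color class, plus an unspecified double count of pairs $(w,c)$) is not an argument: the color class of $C(uw)$ at $u$ may consist of $w$ alone, so there is nothing to swap to; and after any swap the intersection bound of $1$ gives you no control over whether a new common representative appears, let alone one avoiding a clash. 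The tight example (a properly colored $K_{\frac n2,\frac n2}$, where $\delta^c=\frac n2$ and there is no rainbow triangle) shows the hypothesis leaves essentially no slack, so a purely local inclusion--exclusion at one edge cannot be pushed through by generic counting; some genuinely global bookkeeping is required, and that is precisely the step you have not supplied.

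For comparison: the paper does not reprove Theorem~\ref{Thm:Li} directly (it is quoted from \cite{L13}), but its machinery in Section~\ref{Sec:2} yields a stronger, counting form, and the mechanism there is instructive for what your sketch is missing. One first passes to an edge-minimal subgraph still satisfying $\delta^c\geq\frac{n+1}{2}$ (so no monochromatic $C_3$ or $P_4$), then, for a vertex $v$, builds the auxiliary digraph $D_v$ on $N(v)$ whose arcs $\overrightarrow{ab}$ record rainbow paths $vab$, and compares $\sum_a d^+_{D_v}(a)$ (bounded below via color degrees, using $d^c(v)+d^c(a)\geq n+1$) with $\sum_a d^-_{D_v}(a)$ (bounded above in terms of monochromatic classes and the number of $2$-cycles). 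The surplus forces $2$-cycles $xyx$ with $C(xv)\neq C(yv)$, i.e.\ rainbow triangles through $v$, and in fact quadratically many in total (Theorem~\ref{Thm:Rt1}). If you want to salvage your approach, you will need an analogous global count that rules out the ``uniform palette-collision'' configuration; as written, the delicate core you acknowledge is exactly the theorem.
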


A slightly stronger Dirac-type result was proved by
B. Li, Ning, Xu, and Zhang in \cite{LNXZ14}.

\begin{theorem}[\cite{LNXZ14}]
Let $(G,C)$ be an edge-colored graph on $n\geq 5$ vertices. If $\delta^c(G)\geq \frac{n}{2}$
then $G$ contains a rainbow triangle unless $G$ is a properly colored $K_{\frac{n}{2},\frac{n}{2}}$.
\end{theorem}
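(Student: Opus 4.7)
The plan is to argue by contradiction, building on Theorem~\ref{Thm:Li}. Since that theorem already produces a rainbow triangle whenever $\delta^c(G) \geq (n+1)/2$, we may assume $\delta^c(G) = n/2$, which forces $n$ to be even; suppose $(G,C)$ contains no rainbow triangle. Pick a vertex $v$ with $d^c(v) = n/2$, and for each color $c_i \in CN_G(v)$ select a representative neighbor $u_i$ with $C(vu_i) = c_i$. Write $S = \{u_1, \ldots, u_{n/2}\}$ and $T = V(G) \setminus (S \cup \{v\})$, so $|T| = n/2 - 1$. The rainbow-triangle-free hypothesis imposes the local rule that every edge $u_iu_j$ inside $S$ must be colored $c_i$ or $c_j$. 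We choose the representatives so as to minimize $|E(G[S])|$ (ties broken arbitrarily); this minimality is what will prevent destructive re-selection moves later.

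The argument then proceeds in three steps. First, show that $S$ is independent in $G$: for any putative edge $u_iu_j \in E(G[S])$, exhibit a representative swap at some $u_k$ that uses $d^c(u_k) \geq n/2$ together with the local rule above to expose a rainbow triangle through $v$. Second, show that $v$ has no neighbor in $T$, since any such edge would either introduce a new color at $v$ (contradicting $d^c(v) = n/2$) or duplicate some $c_i$ and so allow a rainbow triangle through $u_i$ after reselecting the representative. Third, repeat the same analysis at every vertex of $T$, each of which has color degree at least $n/2$ and therefore must exhaust its color quota by edges into $S$; this forces every $w \in T$ into a role symmetric to $v$, producing a bipartition $V(G) = S \cup (T \cup \{v\})$ with parts of equal size $n/2$, completeness between them, and a properly colored incidence pattern at every vertex. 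Together these yield $G \cong K_{n/2,n/2}$ with a proper edge-coloring.

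The step I expect to be the main obstacle is the first: a single swap of a representative $u_i$ for another $c_i$-colored neighbor of $v$ can both destroy and create edges inside $S$, so one needs the minimality of $|E(G[S])|$ (or a suitable refinement) combined with a pigeonhole argument on the $n/2$ colors available at each $u_j$ to guarantee that some swap genuinely produces a rainbow triangle rather than just shifting the problem. The extremal structure, a properly colored $K_{n/2,n/2}$, then emerges as the unique configuration that blocks every such swap simultaneously, which is precisely why the theorem is sharp.
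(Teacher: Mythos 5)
This statement is quoted from \cite{LNXZ14}; the paper you are reading gives no proof of it, so your proposal can only be judged on its own terms, and as it stands it has a genuine gap rather than a complete argument. The reduction to $n$ even, $\delta^c(G)=n/2$, the choice of a vertex $v$ with $d^c(v)=n/2$, and the observation that any edge $u_iu_j$ inside the representative set $S$ must receive color $c_i$ or $c_j$ are all fine. But the heart of the theorem is exactly your ``first step,'' and you do not prove it: you propose to minimize $|E(G[S])|$ over representative choices and then find a swap that ``exposes'' a rainbow triangle, while explicitly conceding that a swap can create as many edges inside $S$ as it destroys and that you do not yet see how the minimality plus pigeonhole argument closes this. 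Declaring the main obstacle and hoping it can be overcome is not a proof; note also that proving ``$S$ is independent for every rainbow-triangle-free $G$ with $\delta^c=n/2$'' is essentially equivalent in difficulty to the full theorem, since the only such $G$ is the properly colored $K_{n/2,n/2}$, so the plan has not genuinely reduced the problem.

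The later steps also contain unjustified claims. In your second step, if $vw$ with $w\in T$ duplicates a color $c_i$, ``reselecting the representative'' $u_i\mapsto w$ merely changes $S$; it does not by itself produce a rainbow triangle, so the conclusion that $v$ has no neighbor in $T$ (equivalently $d(v)=n/2$ with all colors at $v$ distinct) is not established. In your third step, the assertion that every $w\in T$ ``must exhaust its color quota by edges into $S$'' has no justification: the distinctly colored edges at $w$ could go into $T\cup\{v\}$, and the representative set $S_w$ built at $w$ need not coincide with $S$, so the claimed symmetry and the alignment of the bipartitions at different vertices require real argument. Finally, even granting a bipartition $S\cup(T\cup\{v\})$, you still must prove completeness between the parts and that the coloring is proper at every vertex (not just at $v$); none of this is addressed. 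To turn the sketch into a proof you would need to carry out the swap/counting argument in detail --- for instance along the lines of the directed-graph counting used in Lemma~\ref{Lemma:Countingrt-1} of this paper, or the stability analysis in \cite{LNXZ14} itself --- rather than leaving it as a stated obstacle.
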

Theorem \ref{Thm:Li} motivated much attention on rainbow subgraphs.
Czygrinow, Molla, Nagle, and Oursler \cite{CMNO21} recently proved
that the same condition in Theorem \ref{Thm:Li} ensures a rainbow $\ell$-cycle
$C_{\ell}$ whenever $n>432\ell$, which is sharp for a fixed odd integer
$\ell\geq 3$ when $n$ is sufficiently large. The authors in \cite{LNXZ14}
proposed a new type condition, i.e., every edge-colored graph $(G,C)$ on $n$ vertices
satisfying $e(G)+c(G)\geq \frac{n(n+1)}{2}$ contains a rainbow triangle,
where $e(G)$ is the number of edges in $G$ and $c(G)$ is the number of
all colors appearing on $E(G)$. This motivated further studies on rainbow cliques \cite{XHWZ16}
and properly-colored $C_4$'s \cite{XMZ20}.

The original purpose of this article is to study the supersaturation
problem of rainbow triangles in edge-colored graphs. This problem is
obviously motivated by the study of
supersaturation problem
of triangles in graphs. It studies the following function:
for triangle $C_3$ and for integers $n,t\geq1$,
$$h_{C_3}(n,t) = min\{t(G): |V(G)|=n, |E(G)|=ex(n,C_3)+t\},$$
where $t(G)$ is the number of $C_3$ in $G$ and $ex(n,C_3)$ is the Tur\'an function of $C_3$.
Improving Mantel's theorem,
Rademacher (unpublished, see \cite{E55}) proved that $h_{C_3}(n,1)\geq \lfloor\frac{n}{2}\rfloor$.
Erd\H{o}s \cite{E62-1,E62-2} proved that $h_{C_3}(n,k)\geq k\lfloor\frac{n}{2}\rfloor$
where $k\leq cn$ for some constant $c$. In fact, Erd\H{o}s conjectured that
$h_{C_3}(n,k)\geq k\lfloor\frac{n}{2}\rfloor$ for all $k<\lfloor\frac{n}{2}\rfloor$, which
was finally resolved by Lov\'asz and Simonovits \cite{LS83}.

One can ask for a rainbow analog of the above Erd\H{o}s'
conjecture. In this direction, answering an open
problem in \cite{FNXZ19}, Ehard and Mohr \cite{EM20}
proved there are at least $k$ rainbow triangles in an edge-colored
graph $(G,C)$ such that $e(G)+c(G)\geq \binom{n+1}{2}+k-1$.
If we consider $e(G)+c(G)$ as a variant of Tur\'an function
in edge-colored graphs, then the theorem above tells us that the
supersaturation phenomenon of rainbow triangles under this type of
condition is quite different from the original one. On the other hand,
the problem of finding a counting version of Theorem \ref{Thm:Li}
is still open.

We denote by $\mathcal{G}^{*}_n$ the family of
edge-colored graphs on $n$ vertices with the minimum
color degree at least $\frac{n+1}{2}$,
by $rt(G)$ the number of rainbow triangles in
an edge-colored graph $G$, and by $rt(G;v)$ be the number of
rainbow triangles through a vertex $v$ in $G$.
Denote by $$f(n):=\min\{rt(G):G\in \mathcal{G}^{*}_n\}.$$

Proving a special case of a conjecture which states that
every edge-colored graph on $n\geq 20$ vertices contains
two disjoint rainbow triangles if the minimum color
degree is at least $\frac{n+2}{2}$, Hu, Li, and Yang
developed a key lemma \cite[Lemma~1]{HLY20}, from which
one can easily obtain $f(n)=\Omega(n)$.
One may dare to guess that $f(n)=\Omega(n^2)$.
Our first humble contribution confirms this.
\begin{rem}
Throughout this paper, we repeatedly assume that an edge-colored
graph $(G,C)$ satisfying $\delta^c(G)\geq \frac{n+1}{2}$ and subject
to this, $e(G)$ is minimal. Here the word ``minimal" means
that deleting any edge in $G$ will decrease the color degree
of some vertex of $G$. That is, $G$ contains no monochromatic
$C_3$ or $P_4$. Furthermore, we can see that a spanning subgraph
of $G$ with a same color should be a star forest.
\end{rem}

\begin{theorem}\label{Thm:Main1}
Let $(G,C)$ be an edge-colored graph on $n$ vertices.
Suppose that $\delta^c(G)\geq \frac{n+1}{2}$, and subject to this,
$e(G)$ is minimal.
Then
$$rt(G)\geq\frac{e(G)(\overline{\sigma}^c_2(G)-n)}{3}+\frac{1}{6}\sum_{v\in V(G)}(n-d(v)-1)(d(v)-d^c(v)).$$
\end{theorem}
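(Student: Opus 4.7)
The plan is to bound $rt(xy)$ edge by edge and sum using $\sum_{xy\in E(G)}rt(xy)=3\,rt(G)$. For an edge $xy$ with color $\alpha:=C(xy)$, a common neighbor $z\in N(x)\cap N(y)$ produces a rainbow triangle unless one of three mutually exclusive bad events holds: (i) $C(xz)=\alpha$; (ii) $C(yz)=\alpha$; (iii) $C(xz)=C(yz)\neq\alpha$. The minimality forces every color class to be a star forest containing no monochromatic $P_4$ or $C_3$, which will let me bound type (i) by $f_x(\alpha)-1$, type (ii) by $f_y(\alpha)-1$, and type (iii) by at most one $z$ per color $\beta\neq\alpha$: indeed a monochromatic $\beta$-path $xzy$ forces both $x$ and $y$ to be leaves of the $\beta$-star at $z$, so $z$ is uniquely determined by $x$ and $\beta$ as the center of that star. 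Combined with $|N(x)\cap N(y)|=d(x)+d(y)-n+C(x,y)$, where $C(x,y)$ is the number of common non-neighbors of $x,y$, and the identity $d(v)-f_v(\alpha)+1=d^c(v)+E_v^\alpha$ with $E_v^\alpha:=(d(v)-d^c(v))-(f_v(\alpha)-1)\geq 0$, this will yield the edge-wise estimate
\[
 rt(xy)\;\geq\; d^c(x)+d^c(y)-n\;+\;C(x,y)+E_x^\alpha+E_y^\alpha\;-\;b_3(xy),
\]
where $b_3(xy)$ denotes the type-(iii) count.

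Next I will sum this estimate over $xy\in E(G)$. Using $\sum_{xy}(d^c(x)+d^c(y))\geq e(G)\,\overline{\sigma}_2^c(G)$ (directly from the definition of $\overline{\sigma}_2^c$) and $\sum_{xy}b_3(xy)=rt(G)_{\text{non}}$ (each non-rainbow triangle is counted exactly once, at the unique base edge opposite its apex; write $nt(G)$ for this count), this will give
\[
 3\,rt(G)\;\geq\; e(G)\bigl(\overline{\sigma}_2^c(G)-n\bigr)+\Sigma, \qquad \Sigma:=\sum_{xy\in E(G)}\bigl(C(x,y)+E_x^\alpha+E_y^\alpha\bigr)-nt(G).
\]
The theorem will then follow once I verify $\Sigma\geq\tfrac{1}{2}\sum_{v\in V(G)}(n-d(v)-1)(d(v)-d^c(v))$.

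Proving this last inequality is the main obstacle. My approach will be a double-counting over the star forests of the color classes. Standard manipulations give the identities $\sum_{xy}(E_x^\alpha+E_y^\alpha)=\sum_v d(v)(d(v)-d^c(v))-2\sum_v\sum_\beta\binom{f_v(\beta)}{2}$, $\sum_{xy}C(x,y)=\sum_z e(G[V\setminus N[z]])$, and $nt(G)=\sum_v\sum_{\beta:f_v(\beta)\geq 2}e(G[L_\beta(v)])$, where $L_\beta(v)$ denotes the leaf set of the $\beta$-star at $v$. After reindexing by non-trivial color stars, the claim will reduce to showing that, for each star with center $c$ and leaf set $L$, the open same-color cherries $\binom{|L|}{2}-e(G[L])$ together with the edges inside $V\setminus N[c]$ account for at least $\tfrac{1}{2}(n-d(c)-1)(|L|-1)$. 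The hypothesis $\delta^c(G)\geq(n+1)/2$ will be essential here: it forces each leaf to have enough distinct-color neighbors to pay for this contribution and also controls $d(c)$ enough to bound the cost of closed cherries. The chief subtlety will be to avoid double-counting across different color stars, which the vertex-disjointness of stars within a single color class and careful indexing across colors should handle.
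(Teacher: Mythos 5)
Your edge-based framework is sound and genuinely different from the paper, which argues vertex by vertex: there one builds an auxiliary digraph $D_v$ on $N(v)$ whose arcs are rainbow paths $vab$, bounds out-degrees from below by color degrees and in-degrees from above via $2$-cycles and the star-forest structure, and then kills the leftover discrepancy term $\sum_{j}\sum_{a\in N_j(v)}(d_j(v)-d_j(a))$ by a separate double-counting proposition. Your edge-wise estimate, the identity $\sum_{xy}(E_x^{\alpha}+E_y^{\alpha})=\sum_v d(v)(d(v)-d^c(v))-2\sum_v\sum_{\beta}\binom{f_v(\beta)}{2}$, the identity $\sum_{xy}C(x,y)=\sum_v e(G-N[v])$, and the apex decomposition $nt(G)=\sum_v\sum_{\beta}e(G[N_\beta(v)])$ all check out. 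The gap is exactly where you flag it. After substituting these identities, what must be verified (using $e(G[N_\beta(v)])\le\binom{f_v(\beta)}{2}$) is
\[
\sum_v e(G-N[v])+\sum_v\sum_{\beta}\bigl(f_v(\beta)-1\bigr)\Bigl(d(v)-\tfrac{3}{2}f_v(\beta)\Bigr)\;\ge\;\tfrac{1}{2}\sum_v\bigl(n-d(v)-1\bigr)\bigl(d(v)-d^c(v)\bigr),
\]
and the decisive positive term is $d(v)(f_v(\beta)-1)$, coming from $\sum_v d(v)(d(v)-d^c(v))$. Your stated per-star claim, that $\binom{|L|}{2}-e(G[L])$ plus $e(G-N[c])$ is at least $\tfrac12(n-d(c)-1)(|L|-1)$, omits this term entirely and attaches the wrong coefficient to $\binom{|L|}{2}$; as a standalone statement it is also not provable in the stated generality (when $d(c)\ge\frac{n+1}{2}$ the vertices outside $N[c]$ may span no edges and $L$ may induce a clique, while $n-d(c)-1>0$), and charging $e(G-N[c])$ once per star is illegitimate when $c$ centers stars in several colors.

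The repair is short and needs neither $e(G-N[v])$ nor any control of $e(G[L])$ beyond the trivial bound. For each $v$ and each color $\beta$ with $f_v(\beta)\ge 2$, the edges at $v$ in other colors number $d(v)-f_v(\beta)$ and carry at least $d^c(v)-1$ distinct colors, so $d(v)-f_v(\beta)\ge d^c(v)-1\ge\frac{n-1}{2}$ by the hypothesis $\delta^c(G)\ge\frac{n+1}{2}$; hence $3\bigl(d(v)-f_v(\beta)\bigr)\ge n-1$, which is exactly $d(v)-\frac32 f_v(\beta)\ge\frac{n-d(v)-1}{2}$. Multiplying by $f_v(\beta)-1\ge 0$ and summing over $\beta$ and $v$ gives the displayed inequality, since $\sum_{\beta}(f_v(\beta)-1)=d(v)-d^c(v)$. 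With this bookkeeping your route is a correct alternative to the paper's: the two main terms come out the same, the paper's discrepancy term (disposed of by its Proposition) is replaced in your argument by the exact non-rainbow-triangle count $nt(G)$, and your proof even carries visible slack (you only need $d(v)-f_v(\beta)\ge\frac{n-1}{3}$, and you may discard $\sum_v e(G-N[v])$), paralleling the paper's estimate $A\ge(n-d(v)-1)(d(v)-d^c(v))$.
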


As consequences of Theorem \ref{Thm:Main1},
we obtain two counting versions of Theorem \ref{Thm:Li}.

\begin{theorem}\label{Thm:Rt1}
Let $(G,C)$ be an edge-colored graph on $n$ vertices.
Then $$rt(G)\geq \frac{1}{6}\delta^c(G)(2\delta^c(G)-n)n.$$
In particular, if $\delta^c(G)>cn$ for $c>\frac{1}{2}$, then
$$rt(G)\geq\frac{c(2c-1)}{6}n^3.$$
\end{theorem}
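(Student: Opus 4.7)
The plan is to deduce Theorem \ref{Thm:Rt1} from Theorem \ref{Thm:Main1} by passing to an appropriate edge-minimal subcoloring. Dispose of the trivial range first: if $\delta^c(G)<\frac{n+1}{2}$, then $2\delta^c(G)-n\leq 0$ by integrality, so the claimed lower bound is non-positive and the inequality holds. Assume therefore that $\delta^c(G)\geq\frac{n+1}{2}$, and write $\delta:=\delta^c(G)$.

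Next I would construct $G'\subseteq G$ (with the inherited coloring) by iteratively deleting any edge whose removal does not decrease the color degree of \emph{any} vertex; the process terminates in a spanning subgraph $G'$ with $\delta^c(G')=\delta$ satisfying the minimality condition of the Remark following Theorem \ref{Thm:Main1}, namely that deleting any edge of $G'$ decreases some color degree (equivalently, every monochromatic subgraph of $G'$ is a star forest, since a monochromatic $P_4$ or $C_3$ would contain a deletable middle edge). Hence $G'$ fulfils the hypothesis of Theorem \ref{Thm:Main1}, which gives
\begin{equation*}
rt(G')\ \geq\ \frac{e(G')\bigl(\overline{\sigma}^c_2(G')-n\bigr)}{3}+\frac{1}{6}\sum_{v\in V(G')}\bigl(n-d(v)-1\bigr)\bigl(d(v)-d^c(v)\bigr).
\end{equation*}
Since $d^c_{G'}(v)\geq\delta$ for every $v$, we have $\overline{\sigma}^c_2(G')\geq 2\delta$ and $d_{G'}(v)\geq d^c_{G'}(v)\geq\delta$, so $e(G')\geq n\delta/2$; each summand in the second term is non-negative because $n-d(v)-1\geq 0$ and $d(v)-d^c(v)\geq 0$. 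Using $rt(G)\geq rt(G')$ and combining these bounds yields
\begin{equation*}
rt(G)\ \geq\ \frac{(n\delta/2)(2\delta-n)}{3}\ =\ \frac{1}{6}\delta^c(G)\bigl(2\delta^c(G)-n\bigr)n,
\end{equation*}
which is the first inequality of Theorem \ref{Thm:Rt1}. The ``in particular'' clause is the direct specialization $\delta^c(G)\geq cn$ for $c>\tfrac{1}{2}$, which gives $\delta^c(G)\bigl(2\delta^c(G)-n\bigr)n\geq c(2c-1)n^3$.

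The only point that requires care is the choice of $G'$: a naive reduction that merely deletes edges until $\delta^c=\frac{n+1}{2}$ would replace $\delta$ by $\frac{n+1}{2}$ in the estimates above and yield only an $\Omega(n^2)$ lower bound, so it is essential to delete edges while preserving \emph{every} individual color degree, thereby retaining the full strength $\delta^c(G')=\delta$ when invoking Theorem \ref{Thm:Main1}. Once this reduction is in place, every subsequent inequality is elementary and loses no constant, so the bound matches the extremal rainbow $k$-partite Tur\'an construction mentioned in the abstract.
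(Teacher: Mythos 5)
Your proposal is correct and follows essentially the same route as the paper, which derives Theorem \ref{Thm:Rt1} directly as a consequence of Theorem \ref{Thm:Main1}: pass to an edge-minimal subgraph that preserves every vertex's color degree (so that $\overline{\sigma}^c_2\geq 2\delta^c$ and $e\geq n\delta^c/2$ still hold), note the second sum is non-negative, and use $rt(G)\geq rt(G')$, with the case $\delta^c(G)<\frac{n+1}{2}$ being trivial since the bound is then non-positive. Your remark that one must preserve individual color degrees rather than merely keep $\delta^c\geq\frac{n+1}{2}$ is exactly the right point of care and matches the paper's intent.
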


One may wonder the tightness of Theorem \ref{Thm:Rt1}.
The following example shows that Theorem \ref{Thm:Rt1}
is the best possible.
\begin{exam}

Let $G$ be a rainbow $k$-partite Tur\'an graph on $n$ vertices where $k|n$ and $k\geq 3$. Then
there are exactly $\binom{k}{3}(\frac{n}{k})^3=\frac{(k-1)(k-2)}{6k^2}n^3$
rainbow triangles. By Theorem \ref{Thm:Rt1}, there
are at least $\binom{k}{3}(\frac{n}{k})^3=\frac{(k-1)(k-2)}{6k^2}n^3$ rainbow triangles.
\end{exam}

Setting $\delta^c(G)=\frac{n+1}{2}$ in
Theorem \ref{Thm:Rt1}, we obtain
the right hand of the following.

\begin{theorem}\label{Thm:Rt2}
For even $n\geq 4$, we have $\frac{n^2}{4}\geq f(n)\geq\frac{n^2+2n}{6};$ for odd
$n\geq 3$, we have $\frac{n^2-1}{8}\geq f(n)\geq \frac{n^2+n}{12}$.
\end{theorem}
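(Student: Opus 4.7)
The lower bound is a direct application of Theorem~\ref{Thm:Rt1}. The function $\varphi(x)=x(2x-n)$ has derivative $4x-n$, which is strictly positive on $[(n+1)/2,\infty)$, so $\varphi$ is strictly increasing there. Since $\delta^c(G)$ is a positive integer satisfying $\delta^c(G)\geq (n+1)/2$, the smallest admissible value is $\lceil(n+1)/2\rceil$: this equals $(n+2)/2$ when $n$ is even and $(n+1)/2$ when $n$ is odd. Substituting into $\tfrac{1}{6}\delta^c(G)(2\delta^c(G)-n)n$ yields the claimed lower bounds $(n^2+2n)/6$ and $(n^2+n)/12$, respectively.

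For the upper bound the plan is to exhibit an explicit family of edge-colored graphs in $\mathcal{G}_n^*$. The basic template is a properly edge-colored complete bipartite graph $K_{s,t}$ with $s+t=n$ and $|s-t|\leq 1$, augmented by one or two monochromatic perfect matchings (each in a fresh color) placed inside whichever part is short on colors. By the properness of the bipartite coloring, every rainbow triangle uses exactly one extra matching edge together with its two bipartite neighbors at a common vertex on the opposite side: the two bipartite edges receive distinct colors by properness, and the added matching color is fresh, so the triangle is automatically rainbow. The rainbow-triangle count then reduces to (extra matching edges) $\times$ (size of the opposite part).

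Carrying this out gives the following. For $n=2m$ with $m$ even, take $K_{m,m}$ properly $m$-edge-colored and insert a perfect matching inside each part with two new colors; every vertex reaches color degree $m+1$, and the construction has $2\cdot(m/2)\cdot m=n^2/4$ rainbow triangles. For $n=2m+1$ with $m$ odd, take $K_{m,m+1}$ properly $(m+1)$-edge-colored and insert a perfect matching inside the $(m+1)$-part: this yields exactly $(m+1)/2\cdot m=(n^2-1)/8$ rainbow triangles. For the remaining residues $n\equiv 2\pmod{4}$ and $n\equiv 1\pmod{4}$, one switches to an unbalanced bipartition (for instance $K_{m-1,m+1}$, or a slightly modified variant in the odd case) and adds two disjoint monochromatic matchings inside the larger, now even, part; a short calculation shows the rainbow-triangle count does not exceed $n^2/4$ or $(n^2-1)/8$, respectively.

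The main obstacle is the parity case analysis: one must check in each residue class that the inserted monochromatic subgraph is a star forest (so no monochromatic $P_4$ or $C_3$ is introduced), that every vertex reaches color degree $\lceil(n+1)/2\rceil$, and that the rainbow-triangle bookkeeping matches the stated bound.
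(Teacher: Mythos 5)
Your lower-bound argument is correct and coincides with the paper's: one plugs the least admissible integer value of $\delta^c$, namely $\frac{n+2}{2}$ for even $n$ and $\frac{n+1}{2}$ for odd $n$, into Theorem~\ref{Thm:Rt1}. Your upper-bound strategy is also the paper's in spirit (explicit complete-bipartite-plus-matching examples); replacing the paper's all-rainbow coloring by a properly colored $K_{s,t}$ with fresh-colored monochromatic matchings is an immaterial variation. Your constructions do establish $f(n)\le \frac{n^2}{4}$ for $n\equiv 0\pmod 4$, $f(n)\le (m+1)(m-1)\le \frac{n^2}{4}$ for $n=2m\equiv 2\pmod 4$ (a residue the paper's Example~2 does not literally cover), and $f(n)\le \frac{n^2-1}{8}$ for $n\equiv 3\pmod 4$.

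The genuine gap is the upper bound for $n\equiv 1\pmod 4$. Write $n=2m+1$ with $m$ even (note your ``$K_{m-1,m+1}$'' has the wrong order here). If the even part of the bipartition is the larger one and carries two monochromatic perfect matchings, the color-degree requirement pins down the part sizes: a vertex of the even part of size $s$ has color degree $(n-s)+2\ge m+1$, forcing $s\le m+2$, hence $s=m+2$ and the other part has size $m-1$. That construction contains $m+2$ matching edges, each in $m-1$ rainbow triangles, i.e.\ $(m+2)(m-1)=m^2+m-2$ rainbow triangles, roughly twice the required $\frac{n^2-1}{8}=\frac{m(m+1)}{2}$ (for $n=9$: $18$ versus $10$). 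The balanced alternative $K_{m+1,m}$ with a near-perfect matching plus one extra edge inside the odd part gives $\frac{m+2}{2}\cdot m=\frac{m(m+1)}{2}+\frac{m}{2}$, still too many, and the color-degree condition leaves almost no slack to destroy the surplus by repeating colors: every vertex of the size-$m$ part and every inside-degree-one vertex of the size-$(m+1)$ part has degree exactly $m+1$ and must see all incident colors distinct, so only $O(1)$ triangles can be killed rather than the needed $\frac{m}{2}$. So ``a short calculation'' does not close this residue; a genuinely different construction is required (for $n=5$, e.g., $K_{2,2,1}$ with one repeated color at the degree-$4$ vertex gives exactly $3=\frac{n^2-1}{8}$ rainbow triangles, but nothing in your sketch generalizes this). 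For fairness, the paper shares this soft spot: its Example~3 is stated for $n\equiv 1\pmod 4$ yet uses a matching of size $\frac{n+1}{4}$, an integer only when $n\equiv 3\pmod 4$, so the paper's own verification of the upper bound likewise covers only two residue classes.
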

For Theorem \ref{Thm:Rt2}, the leftmost of each inequality (for $f(n)$) of
Theorem \ref{Thm:Rt2} was shown by
the following two examples. From Theorem \ref{Thm:Rt2},
we infer $f(n)=\Theta(n^2)$.

\begin{exam}
Let $(G,C)$ be a rainbow graph of order $n$ where $n$
is divisible by 4. Let
$V(G)=X_1\cup X_2$, $|X_1|=|X_2|=\frac{n}{2}$, and
each of $G[X_1]$ and $G[X_2]$ consists of a perfect matching of size
$\frac{n}{4}$. In addition, $G-E(X_1)-E(X_2)$ is balanced and complete bipartite.
For each edge $e\in E(X_1)$, it is contained in exactly
$\frac{n}{2}$ rainbow triangles. So does each edge in $G[X_2]$.
Therefore, there are exactly $\frac{n^2}{4}$ rainbow triangles in $G$.
\end{exam}

\begin{exam}
Let $(G,C)$ be a rainbow graph of order $n$ where $n\equiv1\pmod4$.
Let
$V(G)=X_1\cup X_2$, $|X_1|=\frac{n+1}{2}$ and $|X_2|=\frac{n-1}{2}$, and
$G[X_1]$ consists of a perfect matching of size
$\frac{n+1}{4}$. In addition, $G-E(X_1)$ is complete bipartite.
For each edge $e\in E(X_1)$, it is contained in exactly
$\frac{n-1}{2}$ rainbow triangles and so does each edge in $G[X_1]$.
Therefore, there are exactly $\frac{n^2-1}{8}$ rainbow triangles in $G$.
\end{exam}

In 2005, Broersma, X. Li, Woeginger, and Zhang \cite{BLWZ05} proved
an edge-colored graph $(G,C)$ on $n\geq 4$ vertices contains a rainbow $C_3$ or
a rainbow $C_4$ if $|CN(u)\cup CN(v)|\geq n-1$ for every pair of vertices
$u,v\in V(G)$.
Define $G$ to be a rainbow $K_{\frac{n}{2},\frac{n}{2}}$ where
$n$ is even. Then $|CN(u)\cup CN(v)|=n-1$ for each pair
of vertices $u$ and $v$, and $G$ contains no rainbow triangles.
Thus, one need slightly enhance the color degree condition
when finding rainbow triangles.
Broersma et al.'s theorem  was
generalized by Fujita, Ning, Xu
and Zhang \cite{FNXZ19} to the one forcing rainbow
triangles under the same condition.

In this paper, we extend both theorems mentioned
to a counting version as follows.

\begin{theorem}\label{Thm:Colorneighborcounting}
Let $(G,C)$ be an edge-colored graph of order $n\geq 4$
such that $|CN(u)\cup CN(v)|\geq n$ for every pair of vertices
$u,v\in V(G)$. Then $G$ contains $\frac{n^2-2n}{24}$ rainbow $C_3$'s.
\end{theorem}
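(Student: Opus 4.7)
The plan is to establish a per-vertex lower bound on the number of rainbow triangles through each vertex and then sum over all vertices.

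First, I would extract two numerical consequences of the pairwise hypothesis. Since $c(uv) \in CN(u) \cap CN(v)$ whenever $uv \in E(G)$, we have
$$d^c(u) + d^c(v) = |CN(u) \cup CN(v)| + |CN(u) \cap CN(v)| \geq n+1$$
for every edge $uv$, so $\overline{\sigma}^c_2(G) \geq n+1$. In addition, the trivial inequality $|CN(u) \cup CN(v)| \leq d^c(u) + d^c(v)$ summed over all $\binom{n}{2}$ unordered pairs and combined with the hypothesis yields $(n-1)\sum_{v\in V(G)} d^c(v) \geq n\binom{n}{2}$, whence $\sum_v d^c(v) \geq n^2/2$.

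Second, the technical core is a per-vertex lemma asserting $rt(G;v) \geq (d^c(v)-1)/4$ for every $v$. To prove it, I would fix a rainbow spanning star at $v$ with leaves $u_1,\dots,u_k$, where $k = d^c(v)$, whose incident edges carry distinct colors $c_1,\dots,c_k$. For each pair $\{u_i,u_j\}$ the pairwise hypothesis applied to $u_i,u_j$ gives $|CN(u_i)\cup CN(u_j)| \geq n$. Combined with a minimality/structural reduction in the style of Remark 1 (eliminating monochromatic $P_3$'s through $v$ without destroying the hypothesis), this forces either the triangle $vu_iu_j$ to be rainbow directly or the existence of a substitute edge adjacent to $u_i$ or $u_j$ that completes a rainbow triangle through $v$. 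A careful bookkeeping analogous to the existence proofs of Broersma et al.\ \cite{BLWZ05} and Fujita et al.\ \cite{FNXZ19} should then show that at least $(k-1)/4$ distinct rainbow triangles through $v$ arise.

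Third, summing the per-vertex lemma and using $3\,rt(G) = \sum_v rt(G;v)$ gives
$$3\,rt(G) \geq \frac{1}{4}\sum_v \bigl(d^c(v) - 1\bigr) = \frac{1}{4}\Bigl(\sum_v d^c(v) - n\Bigr) \geq \frac{1}{4}\Bigl(\frac{n^2}{2} - n\Bigr) = \frac{n^2 - 2n}{8},$$
so $rt(G) \geq (n^2 - 2n)/24$, as required.

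The main obstacle lies in the per-vertex lemma with the sharp constant $1/4$. When a pair of star leaves $\{u_i,u_j\}$ is non-adjacent, or adjacent via an edge whose color coincides with $c_i$ or $c_j$, one must produce a substitute rainbow triangle through $v$ using a vertex outside the star, and guaranteeing that \emph{distinct} pairs yield \emph{distinct} rainbow triangles without double-counting is delicate. This will likely require either a swapping argument on the choice of rainbow star or a refined double-count carried out over pairs of differently-colored edges at $v$ rather than pairs of star leaves.
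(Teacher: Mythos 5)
Your reductions of the hypothesis are fine: $\overline{\sigma}^c_2(G)\geq n+1$ (since $C(uv)\in CN(u)\cap CN(v)$ for an edge $uv$) and $\sum_v d^c(v)\geq n^2/2$ by averaging over pairs, and the closing arithmetic would indeed give $rt(G)\geq (n^2-2n)/24$ \emph{if} your per-vertex lemma $rt(G;v)\geq (d^c(v)-1)/4$ held for every vertex. But that lemma is exactly the crux, and you do not prove it: the second step is a plan (``a careful bookkeeping \ldots should then show'', ``will likely require either a swapping argument \ldots or a refined double-count''), and the difficulty you yourself flag --- non-adjacent star leaves, edges $u_iu_j$ colored $c_i$ or $c_j$, and ensuring distinct pairs yield distinct rainbow triangles --- is the entire content of the theorem. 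Note also that the paper's own local machinery does not yield such a uniform per-vertex bound: in Lemma~\ref{Lemma:Countingrt-2-Cor} the correction term $|Y_v|d^{mon}_G(v)-\sum_{a\in Y_v}d^{mon}_G(a)$ can be negative for an individual vertex, which is precisely why the paper orders the vertices by monochromatic degree and only controls the \emph{sum} of $rt(G;v_i)$ over the top $\delta^c-1$ vertices (Proposition~\ref{Prop:2} and Theorem~\ref{Thm:Main2}). So your proposed per-vertex estimate is a genuinely new and unsubstantiated claim, not a routine adaptation of the existence proofs of Broersma et al.\ or Fujita et al.; as written the argument has a hole at its only nontrivial step.

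For comparison, the paper needs no bound valid at every vertex. It splits on $\delta^c$: if $\delta^c\geq\frac{n+1}{2}$, Theorem~\ref{Thm:Rt2} already gives more than enough; if some vertex $v$ has $d^c(v)\leq\frac{n-1}{2}$, the union hypothesis forces every other vertex $u$ to satisfy $d^c_G(u)\geq\frac{n+1}{2}$ (and $\geq\frac{n+3}{2}$ when $uv\in E(G)$), so $G-v$ satisfies the color degree hypothesis of Theorem~\ref{Thm:Rt1} on $n-1$ vertices and one is done; in the only remaining case $\delta^c=\frac{n}{2}$, the bound $\overline{\sigma}^c_2(G)\geq n+1$ is fed into Theorem~\ref{Thm:Main2} with $k=\delta^c-1$, giving $\sum_{i=1}^{n/2-1}rt(G;v_i)\geq\frac{1}{2}\cdot\frac{n}{2}\left(\frac{n}{2}-1\right)=\frac{n^2-2n}{8}$ and hence $rt(G)\geq\frac{n^2-2n}{24}$. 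If you want to salvage your outline, you would either have to prove your per-vertex lemma in full (handling the double-counting you postponed), or replace it by a summed estimate of the paper's type.
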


We also prove some better estimate on the number of rainbow triangles
through vertices with high monochromatic degree.

\begin{theorem}\label{Thm:Main2}
Let $(G,C)$ be an edge-colored graph on $n$ vertices with $\delta^c(G)$
and furthermore, $e(G)$ is minimal. Let $V(G)=\{v_1,v_2,\ldots,v_n\}$
such that
$d^{mon}_G(v_1)\geq d^{mon}_G(v_2)\geq \cdots \geq d^{mon}_G(v_n).$
Then for each $1\leq k\leq \delta^c(G)-1$,
\begin{align*}
\sum_{i=1}^k rt(G;v_i)&\geq\frac{1}{2}\left(\sum_{i=1}^k d^{mon}_G(v_i)+k(\delta^c(G)-1)\right)(\overline{\sigma}_2^c(G)-n)+\frac{\Delta_k(G)}{2}.
\end{align*}
where
$$\Delta_k(G)=\left(\delta^c(G)\sum_{i=1}^{k}d^{mon}_G(v_i)-k\sum_{i=1}^{\delta^c(G)}d^{mon}_G(v_i)\right).$$
\end{theorem}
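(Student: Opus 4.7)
The proof adapts the methodology of Theorem~\ref{Thm:Main1} but localizes it to each individual vertex $v_i$, with a refinement tailored to its monochromatic degree. By the minimality hypothesis, every color class of $G$ is a star forest; in particular, any two edges sharing a color and a common endpoint $v$ lie in a single star centered at $v$, so $d^{mon}(v)$ equals the size of a maximum monochromatic star at $v$, realized by some color $c^{*}(v)$.

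Fix $i\in\{1,\ldots,k\}$ and write $v=v_i$. The core step is a single-vertex lower bound for $rt(G;v)$. I would work with an augmented neighborhood $X_v\subseteq N(v)$ formed by merging (a) the entire $c^{*}(v)$-star at $v$ ($d^{mon}(v)$ vertices) with (b) a rainbow representative set of $\delta^c(G)$ edges at $v$ containing one $c^{*}(v)$-edge, yielding $|X_v|=d^{mon}(v)+\delta^c(G)-1$. For each $u\in X_v$, the Ore-type inequality $|N(u)\cap N(v)|\geq d^c(u)+d^c(v)-n\geq \overline{\sigma}_2^c(G)-n$ provides many candidate common neighbors $w$; a candidate $w$ fails to form a rainbow triangle $vuw$ exactly when one of $C(uw)=C(uv)$, $C(uw)=C(vw)$, or $C(uv)=C(vw)$ holds, and each failure mode is capped by a monochromatic-degree count (the star-forest structure forces each doubled color to originate at a single apex). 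Summing over $u\in X_v$, applying inclusion-exclusion on the failure counts, and halving (each rainbow triangle with both non-$v$ endpoints in $X_v$ is counted twice) will yield a single-vertex inequality of the schematic form
\[
rt(G;v) \;\geq\; \tfrac{1}{2}\bigl(d^{mon}(v)+\delta^c(G)-1\bigr)\bigl(\overline{\sigma}_2^c(G)-n\bigr) + \tfrac{1}{2}\Bigl(\delta^c(G)\,d^{mon}(v) - \sum_{u\in Y_v}d^{mon}(u)\Bigr),
\]
where $Y_v\subseteq N(v)$ is an auxiliary set of $\delta^c(G)$ neighbors extracted from the rainbow representative.

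Summing over $i=1,\ldots,k$ produces the principal term $\tfrac{1}{2}\bigl(\sum_{i=1}^{k}d^{mon}(v_i)+k(\delta^c(G)-1)\bigr)(\overline{\sigma}_2^c(G)-n)$ immediately. For the residual, the monotone ordering $d^{mon}(v_1)\geq\cdots\geq d^{mon}(v_n)$ combined with $|Y_{v_i}|=\delta^c(G)$ gives $\sum_{u\in Y_{v_i}}d^{mon}(u)\leq\sum_{j=1}^{\delta^c(G)}d^{mon}(v_j)$ for each $i$, and the assumption $k\leq\delta^c(G)-1$ activates the Chebyshev-style collapse
\[
\sum_{i=1}^{k}\Bigl(\delta^c(G)\,d^{mon}(v_i) - \sum_{u\in Y_{v_i}}d^{mon}(u)\Bigr) \;\geq\; \delta^c(G)\sum_{i=1}^{k}d^{mon}(v_i) - k\sum_{j=1}^{\delta^c(G)}d^{mon}(v_j) \;=\; \Delta_k(G),
\]
which upon halving is precisely the $\Delta_k(G)/2$ correction in the statement.

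The principal obstacle I anticipate is the per-vertex estimate: the three failure modes $C(uw)=C(uv)$, $C(uw)=C(vw)$, and $C(uv)=C(vw)$ must each be controlled by $d^{mon}$-type counts via the star-forest structure, and their combination must be tight enough to isolate a residual involving only $\delta^c(G)$ monochromatic-degree terms (rather than all $|X_v|$ of them) — this is what enables the clean Chebyshev comparison in the summation. Secondary technical points are the factor-$\tfrac{1}{2}$ bookkeeping around double-counting rainbow triangles with both non-$v$ endpoints in $X_v$, and the verification that the auxiliary set $Y_v$ arises as a size-$\delta^c(G)$ slice of $X_v$ compatible with the global ordering argument. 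Once the per-vertex bound is in hand, the remainder is rearrangement-style bookkeeping, with the Chebyshev-style estimate making $k\leq\delta^c(G)-1$ the sharp threshold for the statement.
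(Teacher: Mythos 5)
Your overall architecture coincides with the paper's: a per-vertex lower bound of the shape $rt(G;v)\geq\tfrac12\bigl(d^{mon}(v)+\delta^c(G)-1\bigr)\bigl(\overline{\sigma}_2^c(G)-n\bigr)+\tfrac12(\text{residual})$ (this is Lemma \ref{Lemma:Countingrt-2} specialized as in Lemma \ref{Lemma:Countingrt-2-Cor}), followed by the ordering/Chebyshev summation (Proposition \ref{Prop:2}). The summation half of your argument is essentially the paper's, up to an off-by-one you should fix: a set of neighbors of $v$ joined to $v$ by pairwise distinct colors all different from the majority color $c^{*}(v)$ has size at most $d^c(v)-1$, so a set $Y_v$ of size $\delta^c(G)$ is not available at a vertex of minimum color degree. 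The paper takes $|Y_{v_i}|=\delta^c(G)-1$ and still recovers the full $\Delta_k(G)$ (coefficient $\delta^c$ rather than $\delta^c-1$) by using $v_i\notin Y_{v_i}$, namely $\sum_{a\in Y_{v_i}}d^{mon}(a)\leq\sum_{j=1}^{\delta^c}d^{mon}(v_j)-d^{mon}(v_i)$ for $i\leq\delta^c$; with your cruder bound and the correct size $\delta^c-1$ you would fall short of $\Delta_k(G)$. This part is repairable bookkeeping.

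The genuine gap is in the per-vertex estimate, exactly where you flag uncertainty. Your mechanism — count common neighbors via $|N(u)\cap N(v)|\geq d^c(u)+d^c(v)-n$ and subtract three failure modes ``each capped by a monochromatic-degree count'' — breaks down for the mode $C(uw)=C(vw)$: this is a monochromatic cherry centered at the common neighbor $w$, and its multiplicity is controlled by the monochromatic degrees of the various $w$'s, not by any quantity at $u$ or $v$. Edge-minimality (color classes are star forests) does not forbid vertex-disjoint cherries $u\text{--}w\text{--}v$ in pairwise distinct colors, so for a fixed $u$ every common neighbor of $u$ and $v$ may fail in this mode and your per-$u$ inclusion–exclusion can become nonpositive. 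The paper's proof avoids this with the digraph $D_v$ on $X_v\cup Y_v$ (arc $\overrightarrow{ab}$ iff $ab\in E(G)$ and $C(ab)\neq C(va)$) and the identity $\sum_a d^+_{D_v}(a)=\sum_a d^-_{D_v}(a)$: a cherry yields a single in-arc at $w$ not lying in a $2$-cycle, the number of such in-arcs at $w$ is at most the number of edges at $w$ colored $C(vw)$ minus one, and — crucially — this count is zero for $w\in X_v$, because a leaf of the maximum monochromatic star at $v$ cannot carry another edge of that color without creating a monochromatic $P_4$ or triangle. That observation is what confines the residual to the $\delta^c-1$ vertices of $Y_v$ and makes the Chebyshev step yield $\Delta_k(G)$; your sketch contains no substitute for it. A further mismatch: your candidate count $|N(u)\cap N(v)|$ is not restricted to the chosen set $X_v\cup Y_v$, so the ``each triangle counted twice'' halving does not line up with the main term, whereas the paper's out-degree bound $d^+_{D_v}(a)\geq d^c(a)-1-|V(G)\setminus(X_v\cup Y_v\cup\{v\})|$ is built precisely to stay inside that set. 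So the plan is structurally right, but the per-vertex inequality needs the directed counting (or an equivalent per-$w$ accounting restricted to $X_v\cup Y_v$) to be an actual proof.
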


The above theorem has the following simple but useful corollary.
\begin{theorem}
Let $(G,C)$ be an edge-colored graph on $n$ vertices with $\delta^c(G)\geq \frac{n+1}{2}$.
Let $V(G)=\{v_1,v_2,\ldots,v_n\}$ such that
$d^{mon}_G(v_1)\geq d^{mon}_G(v_2)\geq \cdots \geq d^{mon}_G(v_n).$
Then for each $1\leq k\leq \delta^c(G)-1$,
\begin{align*}
\sum_{i=1}^k rt(G;v_i)\geq\frac{k\delta^c(G)}{2}.
\end{align*}
\end{theorem}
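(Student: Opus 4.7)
The plan is to obtain the corollary as a direct numerical consequence of Theorem~\ref{Thm:Main2}, by replacing each ingredient on its right-hand side with a clean lower bound coming from the hypothesis $\delta^c(G)\geq (n+1)/2$. Assuming the hypotheses of Theorem~\ref{Thm:Main2} are in force (if necessary, by passing to a minimal spanning subgraph of $G$ maintaining $\delta^c\geq (n+1)/2$, and noting that the left-hand sum of rainbow-triangle counts only decreases under edge deletion), it suffices to establish
\[
\tfrac{1}{2}\Bigl(\sum_{i=1}^{k} d^{mon}_G(v_i)+k(\delta^c(G)-1)\Bigr)(\overline{\sigma}_2^c(G)-n)+\tfrac{\Delta_k(G)}{2}\ \geq\ \tfrac{k\delta^c(G)}{2}.
\]

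First I would bound $\overline{\sigma}_2^c(G)-n$. Trivially $\overline{\sigma}_2^c(G)\geq 2\delta^c(G)\geq n+1$, so $\overline{\sigma}_2^c(G)-n\geq 1$. Next, every vertex of $G$ has color degree at least $(n+1)/2\geq 2$ and therefore at least one incident edge, so $d^{mon}_G(v_i)\geq 1$ for every $i$, giving $\sum_{i=1}^{k} d^{mon}_G(v_i)\geq k$ and hence $\sum_{i=1}^{k} d^{mon}_G(v_i)+k(\delta^c(G)-1)\geq k\delta^c(G)$. Combined with the previous step, the main term already contributes at least $\tfrac{1}{2}\cdot k\delta^c(G)\cdot 1=\tfrac{k\delta^c(G)}{2}$, which is exactly the target bound.

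The remaining task is to verify that the correction term $\Delta_k(G)/2$ is nonnegative, which is the only place where the monotone ordering is used. Rewriting
\[
\Delta_k(G)=(\delta^c(G)-k)\sum_{i=1}^{k} d^{mon}_G(v_i)-k\sum_{i=k+1}^{\delta^c(G)} d^{mon}_G(v_i),
\]
the decreasing ordering gives $d^{mon}_G(v_i)\geq d^{mon}_G(v_{k+1})$ for $i\leq k$ and $d^{mon}_G(v_i)\leq d^{mon}_G(v_{k+1})$ for $i\geq k+1$, whence
\[
(\delta^c(G)-k)\sum_{i=1}^{k} d^{mon}_G(v_i)\geq k(\delta^c(G)-k)\,d^{mon}_G(v_{k+1})\geq k\sum_{i=k+1}^{\delta^c(G)} d^{mon}_G(v_i).
\]
This is exactly $\Delta_k(G)\geq 0$; in words, the average of the top $k$ terms of a decreasing sequence dominates the average of its top $\delta^c(G)$ terms.

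I do not expect any serious obstacle. The whole argument is a plug-in into Theorem~\ref{Thm:Main2} together with three one-line estimates, and the only nontrivial point is the sign of $\Delta_k(G)$, which reduces to a two-line rearrangement inequality on the sorted monochromatic-degree sequence. The only mildly subtle bookkeeping is the reduction to a minimal $G$, which I would dispose of at the very start so that Theorem~\ref{Thm:Main2} can be invoked verbatim.
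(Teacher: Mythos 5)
Your three estimates are exactly the intended ``directly follows'' derivation: $\overline{\sigma}^c_2(G)-n\geq 2\delta^c(G)-n\geq 1$, $\sum_{i=1}^k d^{mon}_G(v_i)\geq k$ because every vertex has color degree at least $\frac{n+1}{2}\geq 1$ and hence an incident edge, and $\Delta_k(G)\geq 0$ by the averaging/rearrangement argument on the sorted monochromatic-degree sequence (the same nonnegativity the paper records at the end of Proposition \ref{Prop:2}). So the numerical core of your argument is correct and coincides with the paper's (implicit) proof of this corollary from Theorem \ref{Thm:Main2}.

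The one place where your write-up has a genuine problem is the reduction you tack on at the start, to compensate for the fact that the corollary's statement omits the minimality hypothesis of Theorem \ref{Thm:Main2}. First, passing to a spanning subgraph that is merely minimal ``maintaining $\delta^c\geq\frac{n+1}{2}$'' is the wrong minimality: such deletions can make $\delta^c(G')$ strictly smaller than $\delta^c(G)$, so Theorem \ref{Thm:Main2} would only yield $\frac{k\delta^c(G')}{2}$, and for $k$ close to $\delta^c(G)-1$ the theorem may not even be applicable since it requires $k\leq\delta^c(G')-1$. The correct reduction deletes only edges whose removal decreases no vertex's color degree (this is the minimality of Remark 1), which preserves every $d^c$, hence $\delta^c$, the admissible range of $k$, and the bound $\overline{\sigma}^c_2\geq 2\delta^c$. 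Second, and more substantively, edge deletions can reshuffle the monochromatic-degree order: Theorem \ref{Thm:Main2} applied to the reduced graph $G'$ bounds $\sum_{i=1}^k rt(G';u_i)$ for the $k$ vertices $u_1,\dots,u_k$ of largest $d^{mon}_{G'}$, and these need not be the vertices $v_1,\dots,v_k$ of largest $d^{mon}_G$ named in the statement. Your observation that ``the left-hand sum only decreases under edge deletion'' does not bridge this, because the two sums run over possibly different vertex sets, and there is no monotonicity of $rt(G;\cdot)$ along the $d^{mon}_G$-ordering. The clean resolution is to read the corollary, like Theorem \ref{Thm:Main2}, under the paper's standing minimality convention (Remark 1); with that hypothesis in place your plug-in argument is complete, whereas your stated reduction does not by itself extend the conclusion, with the ordering taken in $G$, to non-minimal graphs.
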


The \emph{friendship graph} $F_k$ is a graph
consisting of $k$ triangles sharing a common vertex.
Finally, we obtain some color degree condition for
the existence of some rainbow triangles sharing one common vertex,
i.e., the underlying graph is a friendship subgraph.
This extends Theorem 1 in another way.
\begin{theorem}\label{Thm:FriendshipSub}
Let $k\geq 2$ and $n\geq 50k^2$.
Let $(G,C)$ be an edge-colored graph on $n$ vertices.
If $\delta^c(G)\geq \frac{n}{2}+k-1$ then $G$
contains $k$ rainbow triangles sharing one common vertex.
\end{theorem}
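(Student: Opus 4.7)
The plan is to fix any vertex $v$ with $d^c(v)\ge\delta^c(G)\ge n/2+k-1$ and construct $k$ rainbow triangles through $v$ that are pairwise edge-disjoint off $v$, which by definition form an $F_k$ centered at $v$. Throughout I assume the minimality convention of the Remark, so each color class is a star forest. Let $W=\{w_1,\ldots,w_d\}$ be a rainbow star at $v$: the $d=d^c(v)$ edges $vw_i$ carry pairwise distinct colors $c_1,\ldots,c_d$.

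The first step is to show that $G[W]$ is dense. Since $d(w_i)\ge\delta^c\ge n/2+k-1$ and $|V(G)\setminus(W\cup\{v\})|\le n-d-1$, every $w_i$ satisfies
$$|N(w_i)\cap W|\ge d(w_i)+d-n\ge 2k-2.$$
Hence $\delta(G[W])\ge 2k-2$ and $e(G[W])\ge(k-1)|W|\ge(k-1)(n/2+k-1)$, so by the Erd\H{o}s--Gallai theorem $\nu(G[W])\ge k$. The hypothesis $n\ge 50k^2$ ensures that $|W|\gg 2k-1$, which later supplies the room needed to absorb bad edges.

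Call an edge $w_iw_j\in E(G[W])$ \emph{good} if $C(w_iw_j)\notin\{c_i,c_j\}$, so that $vw_iw_j$ is a rainbow triangle; a matching of $k$ good edges in $G[W]$ yields the desired $F_k$. Under the minimality hypothesis, a bad edge $w_iw_j$ of color $c_\ell$ for $\ell\in\{i,j\}$ forces $w_\ell$ to be the center of the $c_\ell$-star with $v$ as a leaf. Writing $B=\{\ell:v\text{ is a leaf of the }c_\ell\text{-star}\}$ and $m_\ell$ for the size of that star, every bad edge of $G[W]$ is incident to some $w_\ell$ with $\ell\in B$, and the total bad-edge count is at most $\sum_{\ell\in B}(m_\ell-1)$. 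The main obstacle is bounding this quantity, for which I would exploit the edge-disjointness of distinct color classes together with the freedom to re-choose each representative $w_i$ with $i\notin B$ so as to avoid lying on any $c_\ell$-star of $\ell\in B$. Once the bad-edge count is shown to be linear in $n$ (rather than quadratic), the slack given by $n\ge 50k^2$ together with Berge's lemma (each edge deletion decreases $\nu$ by at most one) delivers a matching of $k$ good edges in $G[W]$, and the corresponding $k$ rainbow triangles through $v$ form the required $F_k$.
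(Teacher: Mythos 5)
Your reduction to finding a matching of $k$ \emph{good} edges inside the rainbow star $W$ is the right target, but the proof stops exactly where the real work lies, and the route you sketch for that step does not go through. You lower-bound $e(G[W])\ge (k-1)|W|$ and invoke Erd\H{o}s--Gallai, but that only yields a matching of $k$ edges of $G[W]$, which may consist entirely of bad edges; what you need is a lower bound on the number of \emph{good} edges, not on all edges. Your plan is to upper-bound the bad edges by something linear in $n$, but this is neither proved nor true in general: a single center $w_\ell$ may carry a monochromatic $c_\ell$-star with up to roughly $n/2-k$ leaves inside $W$ (its color degree can still meet $\frac n2+k-1$), and since distinct colors give edge-disjoint stars, many centers can do this simultaneously, so the bad edges in $G[W]$ can be $\Theta(n^2)$. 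The proposed repair of ``re-choosing representatives'' is only gestured at and cannot obviously avoid leaves of such stars, which may cover all admissible choices. So the central step of the argument is missing, and the quantitative claim it rests on is false as stated.

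For comparison, the paper avoids this by never bounding bad edges at all: it chooses $v$ of \emph{maximum} monochromatic degree, works in $X_v\cup Y_v$ (the largest monochromatic neighborhood together with a rainbow set of neighbors), and uses the in/out-degree count in the auxiliary digraph (Lemma \ref{Lemma:Countingrt-5}, a special case of Lemma \ref{Lemma:Countingrt-2}, where maximality of $d^{mon}(v)$ makes the correction term nonnegative) to show directly that the number of good edges, i.e.\ $rt(G;v)$, is at least $(k-1)(n+2k)/2$. Erd\H{o}s--Gallai is then applied to the graph of good edges, whose matching number would be at most $k-1$ if no $F_k$ existed, giving the contradiction; the properly colored case $\Delta^{mon}(G)=1$ is handled separately via the Tur\'an number of $F_k$ (Theorem \ref{Thm:EFGS95}). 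If you want to salvage your approach, you would in effect have to reprove that counting lemma, i.e.\ show the good edges themselves are at least $(k-1)\cdot\Omega(n)$ in number, rather than subtracting a hoped-for small bad-edge count from $e(G[W])$.
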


This paper is organised as follows.
In Section \ref{Sec:2}, we prove one estimate on the
number of rainbow triangles through one given vertex.
As consequences, we prove Theorems \ref{Thm:Main1}
and \ref{Thm:Colorneighborcounting} (together with
some corollaries). In Section \ref{Sec:3}, we prove another estimate
on the number of rainbow triangles through vertices with high monochromatic
degree, and prove Theorem \ref{Thm:Main2}. In Section \ref{Sec:4},
we prove a theorem slightly stronger than Theorem \ref{Thm:FriendshipSub}.
We conclude this paper with some open problems in the last section.
\section{Counting rainbow triangles}\label{Sec:2}
The main purpose of this section is to prove Theorem \ref{Thm:Main1}.
We first prove the following lemma, whose proof is partly
inspired by \cite[Lemma~1]{HLY20}.
\begin{lemma}\label{Lemma:Countingrt-1}
Let $(G,C)$ be an edge-colored graph on $n$ vertices with $\delta^c(G)\geq \frac{n+1}{2}$
and furthermore, $e(G)$ is minimal.
Then for each $v\in V(G)$,
\begin{align*}
&rt(G;v)\\
&\geq\frac{1}{2}\left((n-d(v)-1)(d(v)-d^c(v)+\sum_{1\leq j\leq d^c(v)}\sum_{a\in N_j(v)}(d_j(v)-d_j(a)+\sum_{a\in N_v}(d^c(v)+d^c(a)-n)\right).
\end{align*}
\end{lemma}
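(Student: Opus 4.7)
The plan is to fix a vertex $v$, decompose $2\cdot rt(G;v) = \sum_{a \in N(v)} T_a$ where $T_a$ counts the vertices $b$ for which $vab$ is a rainbow triangle, and then lower-bound each $T_a$ using the structure forced by the minimality hypothesis. For $a \in N(v)$ with $C(va)=j$, the candidates for $b$ are exactly $N(v) \cap N(a)$, and $vab$ fails to be rainbow precisely when $b$ lies in one of the three subsets
\[
A_a=\{b \in N(v)\cap N(a):C(vb)=j\},\quad B_a=\{b \in N(v)\cap N(a):C(ab)=j\},
\]
\[
C_a=\{b \in N(v)\cap N(a):C(vb)=C(ab)\neq j\}.
\]
The key structural observation is that $A_a,B_a,C_a$ are pairwise disjoint: any nonempty intersection would force a monochromatic triangle on $\{v,a,b\}$, contradicting the minimality of $e(G)$ recorded in the Remark. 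Hence $T_a = |N(v)\cap N(a)| - |A_a| - |B_a| - |C_a|$.

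Next, I would read off structural upper bounds on the three bad sets from the star-forest property of each color class. Since $a \in N_j(v)\setminus N(a)$, we get $|A_a|\le d_j(v)-1$; symmetrically, $v\in N_j(a)\setminus N(v)$ gives $|B_a|\le d_j(a)-1$. For $C_a$, the absence of a monochromatic $P_4$ forces $|N_k(v)\cap N_k(a)|\le 1$ for every color $k$, since two common $k$-neighbors of $v$ and $a$ would create a $k$-colored $C_4$ containing a monochromatic $P_4$; thus each color $k\neq j$ contributes at most one element to $C_a$. To merge these with the common-neighbor lower bound, I would rewrite
\[
|N(v)\cap N(a)| \ge d(v)+d(a)-n = (d^c(v)+d^c(a)-n) + (d(v)-d^c(v)) + (d(a)-d^c(a)),
\]
isolating the color-excess $d^c(v)+d^c(a)-n$ that appears in the lemma. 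After substitution, the subtractions $-|A_a|-|B_a|$ absorb one copy of $d_j(v)-1$ and of $d_j(a)-1$ from the $(d(v)-d^c(v))$ and $(d(a)-d^c(a))$ contributions, and the remaining $j$-color parts rearrange into the asymmetric residual $d_j(v)-d_j(a)$ demanded by the middle term of the lemma.

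The final and most delicate step is to absorb the $-|C_a|$ subtraction, together with the leftover excess-degree contribution at $a$, into a global non-neighbor count, producing the factor $(n-d(v)-1)(d(v)-d^c(v))$ after summing over $a\in N(v)$. Here I would invoke the identity $d(v)-d^c(v) = \sum_{j}(d_j(v)-1)$ and double-count pairs $(a,w)$ in which $w$ is a non-neighbor of $v$: each "excess" $j$-edge at $v$ (i.e.\ a leaf of the $j$-star centered at $v$ beyond the first) can be paired with a non-neighbor of $v$ in a way that exactly cancels the $|C_a|$ contributions, using the star-forest constraint to avoid overlaps. Dividing the resulting sum by $2$ yields the lemma. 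The main obstacle I anticipate is precisely this exchange argument: the raw bound $|N(v)\cap N(a)|\ge d(v)+d(a)-n$ naturally produces $\sum_a d(a)$ rather than the clean product $(n-d(v)-1)(d(v)-d^c(v))$, and reconciling the two—while keeping the coefficients of the $d_j(v)-d_j(a)$ and $d^c(v)+d^c(a)-n$ terms intact and respecting the disjointness of $A_a,B_a,C_a$—is the combinatorial heart of the argument; everything else is routine accounting.
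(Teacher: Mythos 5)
Your setup is sound and is in fact the same double count as the paper's: the paper encodes it as out-/in-degrees of an auxiliary digraph $D_v$ on $N(v)$ (arc $\overrightarrow{ab}$ iff $vab$ is a rainbow path), and your sets $A_a$, $B_a$, $C_a$ correspond exactly to the same-color 2-cycles, the missing out-arcs, and the in-arcs not lying on 2-cycles; the disjointness claim and the bounds $|A_a|\le d_j(v)-1$, $|B_a|\le d_j(a)-1$ are correct, and your common-neighbor bound is at least as strong as the paper's out-degree bound. The genuine gap is the step you yourself flag as unresolved: obtaining the term $(n-d(v)-1)(d(v)-d^c(v))$. Your proposed mechanism (an exchange argument pairing each excess color-$j$ edge at $v$ with a non-neighbor of $v$ so as to ``exactly cancel the $|C_a|$ contributions'') is not how this term arises and does not work as stated; most tellingly, it never uses the hypothesis $\delta^c(G)\ge\frac{n+1}{2}$, which is indispensable at exactly this point.

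What actually has to happen is the following, and it is not routine accounting. First, $\sum_{a\in N(v)}|C_a|$ is controlled by counting at the other endpoint: a vertex $b\in C_a$ with $C(vb)=C(ab)=k$ can occur only when $d_k(v)=1$ (otherwise a monochromatic $P_4$ appears), so $\sum_{a\in N(v)}|C_a|\le\sum_{k:\,d_k(v)=1}\sum_{b\in N_k(v)}(d_k(b)-1)$; combined with the fact that $d_j(a)=1$ whenever $a\in N_j(v)$ and $d_j(v)\ge 2$ (again by the absence of a monochromatic $P_4$), this is what produces the middle term $\sum_j\sum_{a\in N_j(v)}(d_j(v)-d_j(a))$ --- it is not cancelled against non-neighbors. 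Second, after this re-attribution your bound still carries the quadratic deficit $\sum_j d_j(v)(d_j(v)-1)$ coming from $\sum_a|A_a|$, and the inequality you still need is precisely $\sum_j\bigl(d(v)-2d_j(v)\bigr)\bigl(d_j(v)-1\bigr)\ge (n-d(v)-1)\bigl(d(v)-d^c(v)\bigr)$. This is where the paper uses $d_j(v)\le d^{mon}(v)\le d(v)-d^c(v)+1$ together with $d^c(v)\ge\frac{n+1}{2}$ to get $d(v)-2d_j(v)\ge 2d^c(v)-d(v)-2\ge n-d(v)-1$; without invoking the color-degree hypothesis the needed inequality can fail (the left-hand side can even be negative), so no purely combinatorial pairing with non-neighbors of $v$ can close it. In short, your decomposition and first rearrangements are fine and parallel the paper, but the decisive step is missing, not merely deferred.
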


\begin{proof}
Since $G$ is edge-minimal, there is no monochromatic path of length 3
and no monochromatic triangle in $G$.

Let $N_v:=N_G(v)$. Without loss of generality, assume that
$CN_G(v)=\{1,2,\ldots,s\}$, where $s=d^c(v)$.
Let $N_{j}(v):=\{u:C(uv)=j, v\in N_v\}$ and $d_j(v):=|N_j(v)|$, where $1\leq j\leq s$.
Furthermore, assume that $d_1(v)\geq d_2(v)\geq \cdots \geq d_s(v)$.
So $d^{mon}(v)=d_1$.

For the vertex $v\in V(G)$, define a digraph $D_v$ on $N_v$
as follows:
$\overrightarrow{ab}\in A(D_v)$ if and only if $ab\in E(G)$
and $C(ab)\neq C(va)$, i.e., $vab$ is a rainbow path of length 2.
Therefore, for any two vertices $x,y\in N_j(v)$ (if $|N_j(v)|\geq 2$),
there is either a 2-cycle $xyx$ or no arc between $x$ and $y$
(since otherwise, there is a monochromatic $C_3$, a contradiction).

For $a\in N_v$, let $S_a\subseteq N_v\backslash \{a\}$ be maximal
such that $C(au),C(au')$ and $C(av)$ are pairwise different for any two distinct
vertices $u,u'\in N_v$. According to the definition of $D_v$,
every edge $au$, $u\in S_a$, corresponds to an out-arc from $a$
in $D_v$.
Notice that
\begin{align*}
d^+_{D_v}(a)&\geq |S_a|\geq CN_{G[N_v\cup\{v\}]}(a)\geq d^c(a)-1-|V(G)\setminus (N_v\cup \{v\})|.
\end{align*}
Thus, we have
\begin{align*}
d^+_{D_v}(a)\geq d^c(a)+d_G(v)-n.
\end{align*}
Therefore,
\begin{align}\label{Lem1align:e1}
\sum_{a\in N_v}d^+_{D_v}(a)&\geq d_G(v)(d_G(v)-n)+\sum_{a\in N_v}d^c(a)\nonumber\\
&=d_G(v)\left(\sum_{j=1}^s(d_j-1)\right)+\sum_{a\in N_v}(d^c(a)+d^c(v)-n).\nonumber\\
\end{align}

Next, consider $\sum\limits_{a\in N_v}d^-_{D_v}(a)$.
Note that for any vertices $a, b\in N_j(v)$ such
that $ab\in E(G)$ ($d_j(v)\geq 2$), if $\overrightarrow{ba}\in A(D_v)$
then $c(va)=c(ab)$, and so $ba$ is not contained in a 2-cycle.
Hence for every $j\in [1,s]$ and every $a\in N_j(v)$,
there are at most $d_j(a)-1$ in-arcs which are not contained in 2-cycles,
such that $a$ is a common sink. Observe that for any such vertex $a\in N_v$,
if $a\in N_j$ then $d_j=1$; otherwise there is a monochromatic $P_3$ in $G$,
a contradiction.

For $1\leq j\leq s$, let $n_j$ be the number of 2-cycles in $D_v[N_j]$.
Let $n_0$ be the number of all 2-cycles $xyx$ in $D_v$ such that $C(xv)\neq C(yv)$.
That is, $n_0=rt(G;v)$.

Thus, we have
\begin{equation}\label{Lem1align:e2}
\sum_{a\in N_v}d^-_{D_v}(a)\leq \sum_{1\leq j\leq d^c(v),d_j(v)=1}\sum_{a\in N_j(v)}(d_j(a)-1)+2n_0+2\sum_{j=1}^sn_j.
\end{equation}
Since $$2n_j\leq d_j(v)(d_j(v)-1),$$
From (\ref{Lem1align:e2}), we can obtain that
\begin{align}\label{Lem1align:e3}
\sum_{a\in N_v}d^-_{D_v}(a)\leq \sum_{1\leq j\leq d^c(v),d_j(v)=1}\sum_{a\in N_j(v)}(d_j(a)-1)+2n_0+\sum_{j=1}^sd_j(v)(d_j(v)-1).
\end{align}

As $$\sum_{a\in N_v}d^+_{D_v}(a)=\sum_{a\in N_v}d^-_{D_v}(a),$$
combining (\ref{Lem1align:e1}) and (\ref{Lem1align:e3}),
we have
\begin{align}\label{Lem1align:e4}
2n_0&\geq \sum_{a\in N_v}(d^c(v)+d^c(a)-n)+d(v)\left(\sum_{j=1}^s(d_j-1)\right)-2\sum_{j=1}^sd_j(v)(d_j(v)-1)\nonumber\\
&-\sum_{1\leq j\leq s,d_j(v)=1}\sum_{a\in N_j(v)}(d_j(a)-1)+\sum_{j=1}^sd_j(v)(d_j(v)-1).
\end{align}
Set
\begin{align*}
A=d(v)\left(\sum_{j=1}^s(d_j(v)-1)\right)-2\sum_{j=1}^sd_j(v)(d_j(v)-1),
\end{align*}
and
\begin{align*}
B=-\sum_{1\leq j\leq s,d_j(v)=1}\sum_{a\in N_j(v)}(d_j(a)-1)+\sum_{j=1}^sd_j(v)(d_j(v)-1).
\end{align*}
Then  (\ref{Lem1align:e4}) is equivalent to the following
\begin{align}\label{Lem1align:e5}
2n_0&\geq \sum_{a\in N_v}(d^c(v)+d^c(a)-n)+A+B.
\end{align}
By simple algebra,
\begin{align*}
A=\sum_{j=1}^s(d(v)-2d_j(v))(d_j(v)-1)=\sum_{j=1,d_j(v)\geq 2}^s(d(v)-2d_j(v))(d_j(v)-1).
\end{align*}
As $$d_1(v)\leq d(v)-d^c(v)+1$$ and $$d^c(v)\geq \frac{n+1}{2},$$ we have
$$
d(v)-2d_j(v)\geq d(v)-2d_1(v)\geq d(v)-2(d(v)-d^c(v)+1)=2d^c(v)-d(v)-2\geq n-d(v)-1,
$$
and so
\begin{align}\label{Lem1align:e6}
A&\geq \sum_{j=1}^s(n-d(v)-1)(d_j(v)-1)=(n-d(v)-1)(d(v)-d^c(v)).
\end{align}
Furthermore, we obtain
\begin{align}\label{Lem1align:e7}
B&=-\sum_{1\leq j\leq s,d_j(v)=1}\sum_{a\in N_j(v)}(d_j(a)-d_j(v))+\sum_{1\leq j\leq s, d_j(v)\geq 2}\sum_{a\in N_j(v)}(d_j(v)-d_j(a))\nonumber\\
&=\sum_{1\leq j\leq d^c(v)}\sum_{a\in N_j(v)}(d_j(v)-d_j(a)),
\end{align}
where $d_j(a)=1$ when $d_j(v)\geq 2$, since $G$ contains no monochromatic $P_4$.

Now, together with (\ref{Lem1align:e5}), (\ref{Lem1align:e6}), and (\ref{Lem1align:e7}),
we infer
\begin{align*}
2n_0&\geq \left(\sum_{a\in N_v}(d^c(v)+d^c(a)-n)\right)+(n-d(v)-1)(d(v)-d^c(v))\\
&+\sum_{1\leq j\leq d^c(v)}\sum_{a\in N_j(v)}(d_j(v)-d_j(a)).
\end{align*}
This proves Lemma \ref{Lemma:Countingrt-1}.
\end{proof}

By simple technique of counting in two ways, we have the following.
\begin{prop}\label{Prop:Section3}\footnote{Note that in the proof of Lemma  \ref{Lemma:Countingrt-1},
without loss of generality, we assume that $I_v=\{1,2,\ldots,d^c(v)\}$ for simply. In fact, for distinct vertices $u,v\in V(G)$,
$I_u$ may be not equal to $I_v$, and may be not a subset of $[1,C(G)]$.}
Let $(G,C)$ be an edge-colored graph with vertex set $V(G)$
and $\delta^c(G)\geq \frac{n+1}{2}$,
and furthermore, $e(G)$ is minimal. Let $I_v=\{C(uv):uv\in E(G)\}$.
For $k\in I_v$, $N_{k}(v):=\{u\in N_v: C(uv)=k\}$ and $d_{k}(v):=|N_k(v)|$.
Then
\begin{align}
\sum_{v\in V(G)}\sum_{k\in I_v}\sum_{a\in N_{k}(v)}(d_{k}(v)-d_{k}(a))=0.
\end{align}
\end{prop}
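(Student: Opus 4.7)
The plan is a straightforward double counting. Writing $S$ for the claimed sum, I would split
\[
S \;=\; \underbrace{\sum_{v\in V(G)}\sum_{k\in I_v}\sum_{a\in N_k(v)} d_k(v)}_{T_1} \;-\; \underbrace{\sum_{v\in V(G)}\sum_{k\in I_v}\sum_{a\in N_k(v)} d_k(a)}_{T_2},
\]
so that it suffices to show $T_1=T_2$.

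For $T_1$, the summand $d_k(v)$ does not involve $a$, so the inner sum over $a\in N_k(v)$ simply contributes a factor $|N_k(v)|=d_k(v)$. This gives $T_1=\sum_{v}\sum_{k\in I_v} d_k(v)^2$.

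For $T_2$, the key observation is that the set of triples $(v,k,a)$ with $k\in I_v$ and $a\in N_k(v)$ is symmetric under the involution $(v,a)\mapsto(a,v)$: by definition, $a\in N_k(v)$ if and only if $va\in E(G)$ with $C(va)=k$, which is equivalent to $v\in N_k(a)$ and $k\in I_a$. Therefore swapping the outer variable $v$ with the inner variable $a$ yields
\[
T_2 \;=\; \sum_{a\in V(G)}\sum_{k\in I_a}\sum_{v\in N_k(a)} d_k(a) \;=\; \sum_{a\in V(G)}\sum_{k\in I_a} d_k(a)^2,
\]
which is identical to $T_1$ after renaming the dummy variable. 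Hence $T_1=T_2$ and $S=0$.

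There is essentially no obstacle here; the identity is a purely combinatorial consequence of the definitions and does not actually use the hypotheses $\delta^c(G)\geq (n+1)/2$ or edge-minimality (these are inherited from the ambient setting in which the proposition will be applied). The only point that must be stated cleanly is the symmetry-under-swap interpretation of $T_2$, which is why the footnote in the statement explicitly reminds the reader that $I_v$ may differ from vertex to vertex and need not be an initial segment of the color set.
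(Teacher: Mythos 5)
Your proof is correct and is essentially the same double-counting argument as the paper's: the paper sums the antisymmetric quantity $d_{C(xy)}(x)-d_{C(xy)}(y)$ over both orientations of each edge and cancels edge by edge, while you split the sum into two pieces and identify them via the swap $(v,a)\mapsto(a,v)$ on the triple set, which is the same symmetry. Your observation that the hypotheses $\delta^c(G)\geq\frac{n+1}{2}$ and edge-minimality are not actually used also matches the paper, which never invokes them in its proof.
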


\begin{proof}
By definition of $N_k(v)$, we can see
\begin{align*}
\sum_{k\in I_v}\sum_{a\in N_k(v)}(d_k(v)-d_k(a))=\sum_{a\in N_G(v)} d_{C(va)}(v)-d_{C(va)}(a).
\end{align*}
By counting in two ways, we have
\begin{align*}
&\sum_{v\in V(G)}\sum_{a\in N_G(v)} (d_{C(va)}(v)-d_{C(va)}(a))\\
&=\sum_{xy\in E(G)} (d_{C(xy)}(x)-d_{C(xy)}(y))+(d_{C(xy)}(y)-d_{C(xy)}(x))=0.
\end{align*}
This proves Proposition \ref{Prop:Section3}.
\end{proof}

Now we can obtain one main result in this paper, which is stronger than Theorem
\ref{Thm:Main1}.
\begin{theorem}
Let $(G,C)$ be an edge-colored graph with vertex set $V(G)$.
Let  $n=|V(G)|$.
If $\delta^c\geq \frac{n+1}{2}$, and furthermore,
$e(G)$ is minimal, then we have,
\begin{align}
rt(G)&\geq\frac{1}{6}\sum_{v\in V(G)}\left((n-d(v)-1)(d(v)-d^c(v))+\sum_{a\in N_G(v)}(d^c(v)+d^c(a)-n)\right).\nonumber
\end{align}
\end{theorem}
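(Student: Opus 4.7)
The plan is to derive the theorem by summing the per-vertex estimate of Lemma \ref{Lemma:Countingrt-1} over all $v \in V(G)$ and then using the cancellation identity of Proposition \ref{Prop:Section3} to eliminate the ``monochromatic defect'' term. First I would apply Lemma \ref{Lemma:Countingrt-1} at each $v \in V(G)$, which, under the hypothesis $\delta^c(G)\geq (n+1)/2$ with $e(G)$ minimal, bounds $2\,rt(G;v)$ from below by the sum of three quantities: the structural term $(n-d(v)-1)(d(v)-d^c(v))$, a color-neighborhood contribution $\sum_{a \in N_G(v)}(d^c(v)+d^c(a)-n)$, and the defect sum $\sum_{j}\sum_{a\in N_j(v)} (d_j(v) - d_j(a))$.

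Next I would sum over all $v \in V(G)$. On the left-hand side, the standard double-counting identity $\sum_{v \in V(G)} rt(G;v) = 3\, rt(G)$ holds, because each rainbow triangle is counted once through each of its three vertices. On the right-hand side, the first two terms aggregate directly into the expression appearing in the theorem, while the defect contribution becomes $\sum_{v}\sum_{j \in I_v}\sum_{a \in N_j(v)}(d_j(v)-d_j(a))$, which is exactly the quantity shown to vanish in Proposition \ref{Prop:Section3}. Dividing the resulting inequality by $3$, and combining with the factor $1/2$ from Lemma \ref{Lemma:Countingrt-1}, yields the claimed factor $1/6$.

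Since the conceptual content has already been absorbed into the preceding lemma and proposition, the main piece of bookkeeping left is only to verify that the indexing of colors matches. Specifically, the identification of $CN_G(v)$ with $\{1,\ldots,d^c(v)\}$ used within the proof of Lemma \ref{Lemma:Countingrt-1} is purely notational, so the defect sum there really is the same object that Proposition \ref{Prop:Section3} phrases in terms of the true color set $I_v$; this is precisely the content of the footnote to Proposition \ref{Prop:Section3}. Once this identification is recorded, no further estimation, case analysis, or structural argument is needed, and the proof reduces to the two-step calculation sketched above.
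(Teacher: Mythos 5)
Your argument is correct and is exactly the paper's proof: the authors also obtain the theorem by combining Lemma \ref{Lemma:Countingrt-1}, Proposition \ref{Prop:Section3}, and the identity $3\,rt(G)=\sum_{v\in V(G)}rt(G;v)$. Your write-up simply spells out the summation and the cancellation of the defect term that the paper leaves implicit.
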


\begin{proof}
The theorem follows from Lemma \ref{Lemma:Countingrt-1},
Proposition \ref{Prop:Section3}, and the fact that
$3rt(G)=\sum_{v\in V(G)}rt(G;v).$
\end{proof}
Finally, we present the proof of Theorem \ref{Thm:Colorneighborcounting}.

\noindent
{\bf Proof of Theorem \ref{Thm:Colorneighborcounting}.}
If $\delta^c\geq \frac{n+1}{2}$, then by Theorem \ref{Thm:Rt2},
$G$ contains $\frac{n^2+n}{12}$ rainbow $C_3$'s. Thus, $\delta^c\leq\frac{n}{2}$.
Choose $v\in V(G)$ such that $d_G^c(v)=\delta^c\leq \frac{n}{2}$. Set $G'=G-v$.

First we furthermore suppose that $d_G^c(v)\leq \frac{n-1}{2}$.
For a vertex $u$ adjacent to $v$, $|CN(u)\cup CN(v)|\geq n$. It follows
that
$$
d_G^c(u)+d_G^c(v)=|CN(u)\cup CN(v)|+|CN(u)\cap CN(v)|\geq n+1.
$$
It follows that $d_G^c(u)\geq \frac{n+3}{2}$. For a vertex $u$ non-adjacent
to $v$, we also have
$$
d_G^c(u)+d_G^c(v)=|CN(u)\cup CN(v)|+|CN(u)\cap CN(v)|\geq n.
$$
Thus, $d_G^c(u)\geq \frac{n+1}{2}$. It follows that $d_{G'}(u)\geq \frac{n+1}{2}>\frac{|G'|+1}{2}$.
Then by Theorem \ref{Thm:Rt1},
we have $$rt(G')\geq \frac{1}{6}\cdot\frac{n+1}{2}\left(2\cdot\frac{n+1}{2}-(n-1)\right)n\geq \frac{n^2+n}{6}.$$
So $d^c(v)=\frac{n}{2}$, i.e., $\delta^c=\frac{n}{2}$. In this case, for an edge $uv\in E(G)$,
$$d^c(u)+d^c(v)=|CN(u)\cup CN(v)|+|CN(u)\cap CN(v)|\geq n+1.$$
By setting $k=\delta^c-1$ in Theorem \ref{Thm:Main2}, we have
$$rt(G)\geq \frac{1}{3}\sum_{i=1}^{\delta^c-1}rt(G;v_i)\geq\frac{n^2-2n}{24},$$
where $V(G)=\{v_1,v_2,\ldots,v_n\}$ such that $d^{mon}(v_1)\geq d^{mon}(v_2)\geq \cdots\geq d^{mon}(v_n)$.
This proves the theorem.
$\hfill\blacksquare$

\section{Rainbow triangles through a specified vertex}\label{Sec:3}
The main purpose of this section is to obtain better estimate
of the number of rainbow triangles through a specified vertex
when the monochromatic degree of this vertex is large.
Before the proof, we need introduce some additional notations.

For a vertex $v\in V(G)$, let $X_v$ be the maximal subset of
$N_G(v)$ such that $c(va)=c(vb)$ for any two distinct vertices
$a,b\in X_v$.
Then $|X_v|=d^{mon}(v)$. Let $Y_v\subseteq (N_G(v)\setminus X_v)$
such that $c(va)\neq c(vb)$ for any two vertices $a,b\in Y_v$.
Thus, we have $|Y_v|\leq d^c(v)-1$.
In the following, set $$f(v):=\min\{d^c(u)+|Y_v|+1:u\in X_v\cup Y_v\}.$$

We first prove the following lemma, whose proof is a variant of Lemma \ref{Lemma:Countingrt-1}.
\begin{lemma}\label{Lemma:Countingrt-2}
Let $(G,C)$ be an edge-colored graph on $n$ vertices with $\delta^c(G)$,
and subject to this, $e(G)$ is minimal.
Then for each $v\in V(G)$,
\begin{align}
rt(G;v)\geq \frac{1}{2}\left((d^{mon}_G(v)+|Y_v|)(f(v)-n)+(|Y_v|d^{mon}_G(v)-\sum_{a\in Y_v}d^{mon}_G(a))\right).
\end{align}
\end{lemma}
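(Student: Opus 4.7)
The plan is to mimic the digraph argument in Lemma \ref{Lemma:Countingrt-1} but restrict the auxiliary digraph to the smaller set $W:=X_v\cup Y_v$ rather than the full neighborhood $N_G(v)$. Write $\alpha$ for the common color of the edges $va$, $a\in X_v$, and $\beta_y:=C(vy)$ for $y\in Y_v$; by the definitions of $X_v$ and $Y_v$, the $\beta_y$ are pairwise distinct and all differ from $\alpha$. Define a digraph $D'$ on $W$ by $\overrightarrow{ab}\in A(D')$ iff $ab\in E(G)$ and $C(ab)\ne C(va)$, so that each arc corresponds to a rainbow $P_3$ of the form $vab$, and each 2-cycle corresponds to a triangle $vab$ with $C(ab)\notin\{C(va),C(vb)\}$.

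For the lower bound on $\sum_{a\in W}d^+_{D'}(a)$, I would observe that for each $a\in W$ at most $n-|W|-1$ colors incident to $a$ can be ``lost'' to vertices outside $W\cup\{v\}$, so
\[
d^+_{D'}(a)\ \ge\ d^c_G(a)+|W|-n\ =\ d^c_G(a)+d^{mon}_G(v)+|Y_v|-n.
\]
Summing and applying $d^c_G(a)\ge f(v)-|Y_v|-1$ for every $a\in W$ yields
\[
\sum_{a\in W}d^+_{D'}(a)\ \ge\ (d^{mon}_G(v)+|Y_v|)\bigl(f(v)-n+d^{mon}_G(v)-1\bigr).
\]

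For the upper bound on $\sum_{a\in W}d^-_{D'}(a)$, I would classify arcs as belonging to 2-cycles or as single in-arcs. By the no-monochromatic-$C_3$ condition, every edge $ab$ with $a,b\in X_v$ satisfies $C(ab)\ne\alpha$, so it induces a 2-cycle in $D'[X_v]$ whose associated triangle $vab$ is \emph{not} rainbow; all remaining 2-cycles of $D'$ are rainbow triangles through $v$. Hence the number of 2-cycles of $D'$ equals $rt(G;v)+e(G[X_v])$, with $2e(G[X_v])\le d^{mon}_G(v)(d^{mon}_G(v)-1)$. The no-monochromatic-$P_4$ condition, together with $d^{mon}_G(v)\ge 2$, forbids single in-arcs at any $a\in X_v$: such an arc $\overrightarrow{ba}$ with $C(ba)=\alpha$ and any $a'\in X_v\setminus\{a\}$ would give the monochromatic $P_4$ $a'$-$v$-$b$-$a$. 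A single in-arc at $y\in Y_v$ must satisfy $C(by)=\beta_y$, so their number is at most $d_{\beta_y}(y)-1\le d^{mon}_G(y)-1$. Altogether,
\[
\sum_{a\in W}d^-_{D'}(a)\ \le\ \sum_{y\in Y_v}\bigl(d^{mon}_G(y)-1\bigr)+2\,rt(G;v)+d^{mon}_G(v)(d^{mon}_G(v)-1).
\]

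Finally, equating $\sum d^+_{D'}=\sum d^-_{D'}$ and rearranging should produce the claimed inequality, after consolidating the $d^{mon}_G(v)(d^{mon}_G(v)-1)$-type contributions via $(d^{mon}_G(v)+|Y_v|)(d^{mon}_G(v)-1)-d^{mon}_G(v)(d^{mon}_G(v)-1)=|Y_v|(d^{mon}_G(v)-1)$ and absorbing the $\pm|Y_v|$ constants into the term $|Y_v|d^{mon}_G(v)-\sum_{a\in Y_v}d^{mon}_G(a)$. I expect the main obstacle to be the bookkeeping in the upper in-degree bound: carefully using edge-minimality to eliminate all single in-arcs based at $X_v$-vertices when $d^{mon}_G(v)\ge 2$, and bounding single in-arcs at each $y\in Y_v$ by $d^{mon}_G(y)-1$. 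The degenerate case $d^{mon}_G(v)=1$ should either reduce to Lemma \ref{Lemma:Countingrt-1} or be trivial because the right-hand side becomes non-positive in the typical regime.
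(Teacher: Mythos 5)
Your argument is essentially the paper's own proof: the paper defines the same digraph on $X_v\cup Y_v$, derives the identical out-degree bound $d^+_{D_v}(a)\geq f(v)+d^{mon}_G(v)-n-1$, bounds the in-degree sum by $\sum_{a\in Y_v}d^{mon}_G(a)-|Y_v|$ plus twice the number of 2-cycles, and uses $2n_1\leq d^{mon}_G(v)(d^{mon}_G(v)-1)$ exactly as you do, so your rearrangement gives the stated inequality. Two small remarks: the monochromatic configuration excluding a single in-arc $\overrightarrow{ba}$ at $a\in X_v$ is the path $b$-$a$-$v$-$a'$ (edges $ba$, $av$, $va'$ all of color $\alpha$; if $b=a'$ it is a monochromatic triangle), not $a'$-$v$-$b$-$a$ as written (the edge $vb$ need not have color $\alpha$), and the degenerate case $d^{mon}_G(v)=1$ that you flag (where single in-arcs at the unique vertex of $X_v$ are possible) is glossed over by the paper as well, whose proof of this step is only the phrase ``by a similar analysis'' to Lemma \ref{Lemma:Countingrt-1} --- with the path corrected, your write-up is in fact more explicit than the original.
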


\begin{proof}
For a vertex $v\in V(G)$, define a digraph $D_v$ on $X_v\cup Y_v$
as follows:
$\overrightarrow{ab}\in A(D_v)$ if and only if $ab\in E(G)$
and $c(ab)\neq c(va)$.
Let $n_1$ be the number of 2-cycles in $D_v[X]$.
Let $n_2$ be the number of other 2-cycles in $D_v$.
Apparently, $rt(v)\geq n_2$.

For $a\in X_v\cup Y_v$, let $S\subset (X_v\cup Y_v)\backslash \{a\}$ be maximal
such that $c(au),c(au'),c(av)$ are pairwise different for two distinct
vertices $u,u'\in X_v\cup Y_v$.
According to the definition of $D_v$,
every edge $au$, $u\in S$ gives an out-arc of $a$
in $D_v$.
Hence, we have
\begin{align*}
d^+_{D_v}(a)&\geq d^c(a)-1-|V(G)\setminus (X_v\cup Y_v\cup \{v\})|\\
&\geq f(v)+d^{mon}(v)-n-1.
\end{align*}
Therefore,
\begin{equation}\label{e1}
\sum_{a\in X_v\cup Y_v}d^+_{D_v}(a)\geq (d^{mon}(v)+|Y_v|)(f(v)+d^{mon}(v)-n-1).
\end{equation}

Next, consider $\sum_{a\in X_v\cup Y_v}d^-_{D_v}(a)$.
By reasoning the proof of Lemma \ref{Lemma:Countingrt-1} and a similar analysis, we obtain
\begin{equation}\label{e2}
\sum_{a\in X_v\cup Y_v}d^-_{D_v}(a)\leq \sum_{a\in Y_v}d^{mon}(a)-|Y_v|+2(n_1+n_2).
\end{equation}
Since $$\sum_{a\in X_v\cup Y_v}d^+_{D_v}(a)=\sum_{a\in X_v\cup Y_v}d^-_{D_v}(a)$$
and $$2n_1\leq d^{mon}(v)(d^{mon}(v)-1),$$
combining (\ref{e1}) and (\ref{e2}),
we have
$$\begin{aligned}
2n_2&\geq d^{mon}(v)(f(v)-n)+|Y_v|(f(v)+d^{mon}(v)-n)-\sum_{a\in Y_v}d^{mon}(a)\\
&=(d^{mon}(v)+|Y_v|)(f(v)-n)+(|Y_v|d^{mon}(v)-\sum_{a\in Y_v}d^{mon}(a)).
\end{aligned}$$
Hence,
$$rt(G;v)\geq n_2
\geq\frac{1}{2}\left((d^{mon}(v)+|Y_v|)(f(v)-n)+(|Y_v|d^{mon}(v)-\sum_{a\in Y_v}d^{mon}(a))\right).$$
The proof is complete.
\end{proof}

Specially, set $|Y_v|=d^c(v)-1$.
Then $$f(v):=\min\{d^c(v)+d^c(u):u\in X_v\cup Y_v\}\geq \overline{\sigma}^c_2(G),$$
and Lemma \ref{Lemma:Countingrt-2} has the following form:
\begin{lemma}\label{Lemma:Countingrt-2-Cor}
Let $(G,C)$ be an edge-colored graph on $n$ vertices with $\delta^c(G)$
and furthermore, $e(G)$ is minimal. Let $V(G)=\{v_1,v_2,\ldots,v_n\}$
such that
$d^{mon}_G(v_1)\geq d^{mon}_G(v_2)\geq \cdots \geq d^{mon}_G(v_n).$
Then for each $1\leq i\leq \delta^c(G)$,
\begin{align*}
rt(G;v_i)\geq \frac{1}{2}\left((d^{mon}_G(v_i)+d^c_{G}(v_i)-1)(\overline{\sigma}^c_2(G)-n)+(|Y_{v_i}|d^{mon}_G(v_i)-\sum_{a\in Y_{v_i}}d^{mon}_G(a))\right).
\end{align*}
\end{lemma}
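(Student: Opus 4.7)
The plan is to derive this corollary as an immediate specialization of Lemma \ref{Lemma:Countingrt-2}, obtained by making the most generous choice of the auxiliary set $Y_{v_i}$. First I would fix, for each $v_i$, a set $Y_{v_i}\subseteq N_G(v_i)\setminus X_{v_i}$ on which the edges from $v_i$ carry pairwise distinct colors and whose size attains the maximum $|Y_{v_i}|=d^c_G(v_i)-1$; this is always possible by selecting one representative from $N_G(v_i)\setminus X_{v_i}$ for each of the $d^c_G(v_i)-1$ colors in $CN_G(v_i)$ distinct from the unique color used on the monochromatic star $v_iX_{v_i}$.

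The second step is to identify $f(v_i)$ with a quantity controlled by $\overline{\sigma}^c_2(G)$. Substituting $|Y_{v_i}|+1=d^c_G(v_i)$ into the definition $f(v_i)=\min\{d^c_G(u)+|Y_{v_i}|+1:u\in X_{v_i}\cup Y_{v_i}\}$ yields
\[
f(v_i)=\min\bigl\{d^c_G(u)+d^c_G(v_i):u\in X_{v_i}\cup Y_{v_i}\bigr\}.
\]
Since every such $u$ is a neighbor of $v_i$ and hence $uv_i\in E(G)$, the definition $\overline{\sigma}^c_2(G)=\min\{d^c(x)+d^c(y):xy\in E(G)\}$ immediately gives $f(v_i)\geq\overline{\sigma}^c_2(G)$.

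Finally, I would plug these two ingredients back into Lemma \ref{Lemma:Countingrt-2}. Under the standing hypothesis $\delta^c(G)\geq (n+1)/2$, one has $\overline{\sigma}^c_2(G)\geq n+1>n$, so the first summand $(d^{mon}_G(v_i)+|Y_{v_i}|)(f(v_i)-n)$ has a nonnegative coefficient multiplying a positive quantity, and replacing $f(v_i)$ by its lower bound $\overline{\sigma}^c_2(G)$ only weakens the bound. The second summand $|Y_{v_i}|d^{mon}_G(v_i)-\sum_{a\in Y_{v_i}}d^{mon}_G(a)$ is copied verbatim for the fixed set $Y_{v_i}$. I do not foresee any genuine obstacle; the proof reduces to a bookkeeping check that the specialization $|Y_{v_i}|=d^c_G(v_i)-1$ is consistent with the original definitions of $X_v$ and $Y_v$, and that the sign of $f(v_i)-n$ permits the $\overline{\sigma}^c_2$ relaxation.
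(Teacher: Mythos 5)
Your proposal is correct and takes essentially the same route as the paper, which obtains this lemma precisely by setting $|Y_v|=d^c(v)-1$ in Lemma \ref{Lemma:Countingrt-2} and observing that $f(v)=\min\{d^c(v)+d^c(u):u\in X_v\cup Y_v\}\geq\overline{\sigma}^c_2(G)$ because every such $u$ is adjacent to $v$. One minor remark: your appeal to $\delta^c(G)\geq\frac{n+1}{2}$ to make $f(v_i)-n$ positive is unnecessary (and not part of the lemma's hypotheses as stated); since the coefficient $d^{mon}_G(v_i)+|Y_{v_i}|$ is nonnegative, the inequality $f(v_i)\geq\overline{\sigma}^c_2(G)$ alone justifies replacing $f(v_i)$ by $\overline{\sigma}^c_2(G)$ regardless of sign.
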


\begin{prop}\label{Prop:2}
Let $(G,C)$ be an edge-colored graph on $n$ vertices with $\delta^c(G)$
and furthermore, $e(G)$ is minimal. Let $V(G)=\{v_1,v_2,\ldots,v_n\}$
such that
$d^{mon}_G(v_1)\geq d^{mon}_G(v_2)\geq \cdots \geq d^{mon}_G(v_n).$
Let $Y_{v_i}$ be defined as in Lemma \ref{Lemma:Countingrt-2} with $|Y_{v_i}|=\delta^c(G)-1$.
Then for each $1\leq k\leq \delta^c(G)-1$,
\begin{align*}
\sum_{i=1}^k(|Y_{v_i}|d^{mon}_G(v_i)-\sum_{a\in Y_{v_i}}d^{mon}_G(a))\geq\left(\delta^c\sum_{i=1}^{k}d^{mon}(v_i)-k\sum_{i=1}^{\delta^c}d^{mon}(v_i)\right)\geq 0.
\end{align*}
\end{prop}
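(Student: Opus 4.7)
\medskip

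\noindent\textbf{Proof proposal for Proposition \ref{Prop:2}.}
My plan is to prove the two inequalities in the chain separately; the hypotheses on $e(G)$ and $\delta^c(G)$ play no role in either and everything is driven by (i) the normalization $|Y_{v_i}|=\delta^c(G)-1$ together with $v_i\notin Y_{v_i}$ (since $Y_{v_i}\subseteq N_G(v_i)$), and (ii) the ordering $d^{mon}_G(v_1)\ge d^{mon}_G(v_2)\ge\cdots\ge d^{mon}_G(v_n)$.

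I would dispose of the second (non\-negativity) inequality first. We want $\delta^c(G)\sum_{i=1}^{k}d^{mon}_G(v_i)\ge k\sum_{j=1}^{\delta^c(G)}d^{mon}_G(v_j)$, which after splitting the right-hand sum at $k$ is equivalent to
\[
(\delta^c(G)-k)\sum_{i=1}^{k}d^{mon}_G(v_i)\;\ge\;k\sum_{j=k+1}^{\delta^c(G)}d^{mon}_G(v_j).
\]
Writing both sides as double sums over $(i,j)\in[1,k]\times[k+1,\delta^c(G)]$, this reduces to the pairwise comparison $d^{mon}_G(v_i)\ge d^{mon}_G(v_j)$ whenever $i\le k<j$, which is just the sorted ordering.

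For the first inequality, the key estimate is that for each $i\in[1,k]$,
\[
\sum_{a\in Y_{v_i}} d^{mon}_G(a)\;\le\;\sum_{j=1}^{\delta^c(G)} d^{mon}_G(v_j)\;-\;d^{mon}_G(v_i).
\]
The reason is that $Y_{v_i}$ is a set of $\delta^c(G)-1$ vertices in $V(G)\setminus\{v_i\}$, so its total $d^{mon}_G$-mass is bounded above by the sum of the $\delta^c(G)-1$ largest values of $d^{mon}_G$ on $V(G)\setminus\{v_i\}$; because the $v_j$'s are already sorted in non-increasing order of $d^{mon}_G$ and the assumption $k\le\delta^c(G)-1$ forces $i\in\{1,\ldots,\delta^c(G)-1\}$, those top $\delta^c(G)-1$ values are precisely the ones indexed by $\{1,\ldots,\delta^c(G)\}\setminus\{i\}$. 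Summing over $i$ and using $|Y_{v_i}|=\delta^c(G)-1$, I then obtain
\[
\sum_{i=1}^{k}\!\Bigl(|Y_{v_i}|\,d^{mon}_G(v_i)-\!\!\sum_{a\in Y_{v_i}}\!\! d^{mon}_G(a)\Bigr)\ge(\delta^c(G)-1)\sum_{i=1}^{k} d^{mon}_G(v_i)-k\!\sum_{j=1}^{\delta^c(G)}\!\! d^{mon}_G(v_j)+\sum_{i=1}^{k} d^{mon}_G(v_i),
\]
and the right-hand side collapses to $\delta^c(G)\sum_{i=1}^{k}d^{mon}_G(v_i)-k\sum_{j=1}^{\delta^c(G)}d^{mon}_G(v_j)$, as required.

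The whole argument amounts to two rearrangement-type bookkeeping steps, so there is no substantial obstacle. The one point worth flagging is the role of the hypothesis $k\le\delta^c(G)-1$: without it, $v_i$ could fall outside the top $\delta^c(G)-1$ vertices, in which case the correct bound on $\sum_{a\in Y_{v_i}}d^{mon}_G(a)$ would involve $d^{mon}_G(v_{\delta^c(G)})$ rather than $d^{mon}_G(v_i)$, and the telescoping that produces the $\sum_{i=1}^k d^{mon}_G(v_i)$ correction term in the middle expression would no longer go through cleanly.
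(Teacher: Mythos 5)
Your proposal is correct and follows essentially the same route as the paper: the paper's key step is exactly your estimate $\sum_{a\in Y_{v_i}}d^{mon}_G(a)\leq \sum_{j=1}^{\delta^c}d^{mon}_G(v_j)-d^{mon}_G(v_i)$ (stated there as $\sum_{j=1}^{i-1}d^{mon}(v_j)+\sum_{j=i+1}^{\delta^c}d^{mon}(v_j)$), followed by the same summation over $i\in[1,k]$. The only difference is that you also spell out the final nonnegativity via the pairwise comparison across the split at $k$, which the paper asserts without detail.
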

\begin{proof}
Note that for $v_i\in V(G)$, $v_i\notin Y_{i}:=Y_{v_i}$.
Hence,
$$\sum_{a\in Y_{i}}d^{mon}(a)\leq \sum_{j=1}^{i-1}d^{mon}(v_j)+\sum_{j=i+1}^{\delta^c}d^{mon}(v_j).$$
Thus,
$$\begin{aligned}
&\sum\limits_{i=1}^{k}\sum\limits_{a\in Y_{i}}d^{mon}(a)\leq \sum\limits_{i=1}^{k} \left(\sum\limits_{j=1}^{i-1}d^{mon}(v_j)+\sum\limits_{j=i+1}^{\delta^c}d^{mon}(v_j)\right)=k\sum_{i=1}^{\delta^c}d^{mon}(v_i)-\sum_{i=1}^kd^{mon}(v_{i}).
\end{aligned}$$
It follows that
$$\begin{aligned}
\sum_{i=1}^{k}\left(d^{mon}(v_i)(\delta^c-1)-\sum_{a\in Y_{i}}d^{mon}(a)\right)\geq \left(\delta^c\sum_{i=1}^{k}d^{mon}(v_i)-k\sum_{i=1}^{\delta^c}d^{mon}(v_i)\right).
\end{aligned}$$
The proof of Proposition \ref{Prop:2} is complete.
\end{proof}
Now we can obtain the following theorem.
\begin{theorem}
Let $(G,C)$ be an edge-colored graph on $n$ vertices with $\delta^c(G)$
and furthermore, $e(G)$ is minimal. Let $V(G)=\{v_1,v_2,\cdots,v_n\}$
such that
$d^{mon}_G(v_1)\geq d^{mon}_G(v_2)\geq \ldots \geq d^{mon}_G(v_n).$
Then for each $1\leq k\leq \delta^c(G)-1$,
\begin{align*}
\sum_{i=1}^k rt(G;v_i)&\geq\frac{1}{2}\left(\sum_{i=1}^k d^{mon}_G(v_i)+k(\delta^c(G)-1)\right)(\overline{\sigma_2}^c(G)-n)+\frac{\Delta_k(G)}{2}.
\end{align*}
where
$$\Delta_k(G)=\left(\delta^c(G)\sum_{i=1}^{k}d^{mon}_G(v_i)-k\sum_{i=1}^{\delta^c(G)}d^{mon}_G(v_i)\right).$$
\end{theorem}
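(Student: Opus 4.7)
The proof is a short combination of Lemma~\ref{Lemma:Countingrt-2-Cor} and Proposition~\ref{Prop:2}, both of which have already been established above. The plan is simply to apply the per-vertex estimate to each of the top-$k$ monochromatic-degree vertices $v_1,\ldots,v_k$, sum the $k$ resulting inequalities, and then invoke Proposition~\ref{Prop:2} to replace the awkward residual sum $\sum_{i=1}^{k}\sum_{a \in Y_{v_i}} d^{mon}_G(a)$ by the clean quantity $\Delta_k(G)$. There is no further combinatorial content beyond these two inputs.

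Concretely, for each $i \in \{1,\ldots,k\}$ I would fix a set $Y_{v_i} \subseteq N_G(v_i) \setminus X_{v_i}$ of size $\delta^c(G) - 1$ (the size required by Proposition~\ref{Prop:2}), chosen so that $f(v_i) \geq \overline{\sigma}^c_2(G)$ is preserved. Lemma~\ref{Lemma:Countingrt-2} then delivers
\begin{equation*}
2\,rt(G;v_i) \;\geq\; \bigl(d^{mon}_G(v_i) + \delta^c(G) - 1\bigr)\bigl(\overline{\sigma}^c_2(G) - n\bigr) + \Bigl((\delta^c(G)-1)\,d^{mon}_G(v_i) - \sum_{a \in Y_{v_i}} d^{mon}_G(a)\Bigr).
\end{equation*}
Summing this inequality over $i = 1,\ldots,k$, the first bracket aggregates to precisely $\bigl(\sum_{i=1}^{k} d^{mon}_G(v_i) + k(\delta^c(G)-1)\bigr)(\overline{\sigma}^c_2(G)-n)$, which is the principal term stated in the theorem. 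The second bracket, after summation, is lower-bounded by $\Delta_k(G)$ via a direct application of Proposition~\ref{Prop:2}. Dividing both sides by $2$ yields the claimed inequality.

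The main obstacle I anticipate is less combinatorial than bookkeeping: verifying $f(v_i) \geq \overline{\sigma}^c_2(G)$ is automatic only when $|Y_{v_i}| = d^c_G(v_i) - 1$ (the setting used in Lemma~\ref{Lemma:Countingrt-2-Cor}), while Proposition~\ref{Prop:2} forces the smaller size $|Y_{v_i}| = \delta^c(G) - 1$. Reconciling the two requires either a careful selection of $Y_{v_i}$ among neighbors of $v_i$ with largest color degree, so that $d^c_G(u) + \delta^c(G) \geq \overline{\sigma}^c_2(G)$ holds for every $u \in X_{v_i} \cup Y_{v_i}$, or a trivial refinement of the proof of Lemma~\ref{Lemma:Countingrt-2-Cor} with the smaller choice of $|Y_{v_i}|$ plugged in. Once this small reconciliation is in place, the rest is pure algebra and no further combinatorial input is needed.
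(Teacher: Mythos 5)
Your overall route is exactly the paper's: its proof of this theorem is the one-line remark that it follows from Lemma~\ref{Lemma:Countingrt-2-Cor} and Proposition~\ref{Prop:2}, which is precisely your plan, and the size mismatch you flag ($|Y_{v_i}|=d^c_G(v_i)-1$ in Lemma~\ref{Lemma:Countingrt-2-Cor} versus $|Y_{v_i}|=\delta^c(G)-1$ in Proposition~\ref{Prop:2}) is genuine and is silently glossed over in the paper. However, both of the reconciliations you propose go in the wrong direction. If you take $|Y_{v_i}|=\delta^c(G)-1$, then in Lemma~\ref{Lemma:Countingrt-2} you only get $f(v_i)=\min\{d^c_G(u)+\delta^c(G):u\in X_{v_i}\cup Y_{v_i}\}\geq 2\delta^c(G)$, and this cannot in general be raised to $\overline{\sigma}^c_2(G)$: the set $X_{v_i}$ is forced on you (it realizes $d^{mon}_G(v_i)$), so its members cannot be selected by color degree, and even for a neighbor $u$ the edge $uv_i$ only gives $d^c_G(u)\geq\overline{\sigma}^c_2(G)-d^c_G(v_i)$, which falls short of $\overline{\sigma}^c_2(G)-\delta^c(G)$ whenever $d^c_G(v_i)>\delta^c(G)$. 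Hence both your ``careful selection'' and your ``rerun the lemma with the smaller $|Y_{v_i}|$'' options only yield the theorem with $2\delta^c(G)-n$ in place of $\overline{\sigma}^c_2(G)-n$, a strictly weaker statement when $\overline{\sigma}^c_2(G)>2\delta^c(G)$.

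The reconciliation that works is the opposite one: keep $|Y_{v_i}|=d^c_G(v_i)-1$ as in Lemma~\ref{Lemma:Countingrt-2-Cor}, so the factor $\overline{\sigma}^c_2(G)-n$ is legitimate; since the (implicitly intended) hypothesis $\delta^c(G)\geq\frac{n+1}{2}$ gives $\overline{\sigma}^c_2(G)-n\geq 1>0$, the coefficient $d^{mon}_G(v_i)+d^c_G(v_i)-1$ may be lowered to $d^{mon}_G(v_i)+\delta^c(G)-1$ in the principal term (note this replacement would be false if $\overline{\sigma}^c_2(G)<n$, so the positivity really is needed); and the counting in Proposition~\ref{Prop:2} extends verbatim to sets of size $d^c_G(v_i)-1$: since $Y_{v_i}\subseteq V(G)\setminus\{v_i\}$ and $i\leq k\leq\delta^c(G)-1\leq d^c_G(v_i)$,
\begin{align*}
(d^c_G(v_i)-1)\,d^{mon}_G(v_i)-\sum_{a\in Y_{v_i}}d^{mon}_G(a)
&\geq \sum_{j\leq d^c_G(v_i),\, j\neq i}\bigl(d^{mon}_G(v_i)-d^{mon}_G(v_j)\bigr)\\
&\geq \delta^c(G)\,d^{mon}_G(v_i)-\sum_{j=1}^{\delta^c(G)}d^{mon}_G(v_j),
\end{align*}
where the second inequality holds because the discarded summands with $\delta^c(G)<j\leq d^c_G(v_i)$ are nonnegative ($d^{mon}_G(v_j)\leq d^{mon}_G(v_i)$ for $j\geq i$). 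Summing over $i=1,\ldots,k$ gives $\Delta_k(G)$, and dividing by $2$ completes the argument. With this correction your proof is complete and coincides in substance with the paper's intended one.
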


\begin{proof}
This theorem directly follows from Lemma \ref{Lemma:Countingrt-2-Cor} and Proposition \ref{Prop:2}.
\end{proof}

\section{Edge-colored friendship subgraphs}\label{Sec:4}

In this section, we shall prove a result slightly stronger than Theorem \ref{Thm:FriendshipSub}.
For a graph $G$, we denote by $\Delta^{mon}(G):=\max\{d^{mon}_G(v):v\in V(G)\}$.
\begin{theorem}\label{Thm:Friendshipgraph}
Let $k,n$ be positive integers, and $G$ be an edge-colored graph on
$n$ vertices with $n\geq 50k^2$ where $k\geq 2$,
and $\delta^c(G)\geq \frac{n}{2}+k-1$.
Let $v\in V(G)$ such that $d^{mon}_G(v)=\Delta^{mon}(G)$.
Then $G$ contains $k$ rainbow triangles sharing only the vertex $v$
as the center (i.e., the underly graph is $F_k$ with $v$ as its center).
\end{theorem}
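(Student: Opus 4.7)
The plan is to argue via the auxiliary graph $H_v$ on $N_G(v)$, where $ab\in E(H_v)$ iff $vab$ is a rainbow triangle: an $F_k$ centred at $v$ is precisely a matching of size $k$ in $H_v$. I would first reduce $G$ to an edge-minimal coloring as in Remark 1, so every color class is a star forest; let $m=d^{mon}_G(v)=\Delta^{mon}(G)$ and let $X_v$ be the mono-$c_0$ neighbourhood of $v$ with $|X_v|=m$.

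Assume for contradiction that $\nu(H_v)\le k-1$. Then $H_v$ admits a vertex cover $S$ of size at most $2(k-1)$, and $T:=N_G(v)\setminus S$ is an independent set of $H_v$ of size at least $d_G(v)-2(k-1)\ge \tfrac{n}{2}+m-k$, using $d_G(v)\ge d^c_G(v)+m-1$. For each pair $\{a,b\}\subseteq T$ the triangle $vab$ fails to be rainbow, so one of (i) $ab\notin E(G)$; (ii) $c(va)=c(vb)$; or (iii) $c(ab)\in\{c(va),c(vb)\}$ holds. Using $d_G(a)\ge n/2+k-1$ I would bound (i) by $\tfrac{|T|(n/2-k)}{2}$; using the maximality $d^{mon}(v)=\Delta^{mon}(G)$ so that $|T_c|\le m$ for every $c$, I would bound (ii) by $(m-1)|T|/2$; and using the no-monochromatic-$P_4$ property of the edge-minimal setting I would confine type (iii) to singleton colors at $v$ (since $d_c(v)\ge 2$ forces $a\in N_c(v)$ to have no other $c$-edges), bounding it by $(m-1)\cdot s_1$ where $s_1$ is the number of singleton colors meeting $T$.

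To close the argument I would split into two regimes. If $m\ge k$, the mono-star $X_v$ already supplies $k$ distinct anchors $x_1,\ldots,x_k\in X_v$; for each, the common-neighbour count $|N_G(v)\cap N_G(x_i)|\ge d_G(v)+d_G(x_i)-n\ge 2k+m-3$ is large enough that after excluding $U_{i-1}$ of size $2(i-1)$, the at-most $m-1$ mono-$c_0$ neighbours of $x_i$, and the few "bad" partners $y$ with $c(x_iy)=c(vy)$ (each constrained by no-mono-$P_4$ to come from singleton colors at $v$), some admissible $y_i$ remains, so the greedy construction succeeds. If $m<k$, the (ii) and (iii) bounds are of order $O(k\cdot|T|)$ while (i) is at most $\tfrac{|T|(n/2-k)}{2}$, and combined with $|T|\ge \tfrac{n}{2}+m-k$ the inequality $\binom{|T|}{2}\le(\text{i})+(\text{ii})+(\text{iii})$ is violated once $n\ge 50k^2$, delivering the contradiction.

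The main obstacle is the tight regime $m$ near $k$, where neither the direct star construction nor the counting contradiction comfortably wins and the three bounds collide within an additive error of order $O(km)$. Extracting a strict inequality there requires carefully sharpening (ii) via $|T\cap X_v|\le m$ and tracking the singleton-color contributions to (iii), while exploiting both the max-$d^{mon}$ property (to cap $|T_c|$ and to keep the correction $|Y_v|m-\sum_{a\in Y_v}d^{mon}(a)$ in Lemma \ref{Lemma:Countingrt-2-Cor} nonnegative) and the quadratic slack $n\ge 50k^2$ to absorb accumulated $O(k^2)$ lower-order terms. I expect the proof to interleave this structural classification of $N_G(v)$ with a Lemma \ref{Lemma:Countingrt-1}-style digraph count applied to the reduced graph $G-U_{i-1}$ at each greedy step.
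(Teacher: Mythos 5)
Your overall frame is the paper's: you pass to the link graph of rainbow triangles at $v$ and observe that an $F_k$ centred at $v$ is a matching of size $k$ there. But the quantitative engine you substitute does not close, and you essentially concede this yourself in the ``tight regime'' paragraph; in fact the gap is worse than you suggest. In the $m<k$ regime, your counting gives no contradiction for any $m\ge 2$, and increasing $n$ does not help: with $|T|\ge \tfrac n2+m-k$ the excess of $\binom{|T|}{2}$ over your bound (i) is only about $\tfrac{(m-1)|T|}{2}$, while your bounds (ii) and (iii) may cost up to $\tfrac{(m-1)|T|}{2}$ and $(m-1)|T|$ respectively (note $s_1$ can be as large as $|T|$, since there can be up to $d^c(v)$ singleton colors at $v$); the needed inequality reduces to roughly $m-1>\tfrac{3(m-1)}{2}$, which fails for all $m\ge 2$ independently of $n\ge 50k^2$. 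In the $m\ge k$ regime the greedy step is unjustified: the guaranteed number of common neighbours of $v$ and $x_i$ is only $2k+m-3$, while already the exclusions $U_{i-1}$ ($\le 2k-2$), the members of $X_v$ ($\le m-1$) and the $c_0$-coloured edges at $x_i$ ($\le m-1$) total $2k+2m-4$, which exceeds $2k+m-3$ once $m\ge 2$; moreover the ``bad'' partners with $c(x_iy)=c(vy)$ are not few --- edge-minimality only forces each such colour to be a singleton at $v$, and there may be $\Theta(n)$ singleton colours, so in principle every admissible common neighbour could be bad. So neither regime is established, and the middle regime you flag is not the only problem.

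The missing idea is the one the paper actually uses: a digraph in/out-degree count on $X_v\cup Y_v$ (Lemma \ref{Lemma:Countingrt-5}, a special case of Lemma \ref{Lemma:Countingrt-2}) gives the much stronger lower bound
$rt(G;v)\ge \tfrac12\bigl(d^{mon}(v)+d^c(v)-1\bigr)\bigl(\delta^c(G)+d^c(v)-n\bigr)\ge (k-1)\bigl(d^{mon}(v)+d^c(v)-1\bigr)$
rainbow triangles through $v$ whose third edge lies in $G'=G[X_v\cup Y_v]$, i.e.\ at least $(k-1)|G'|$ edges of the rainbow link graph on $|G'|\ge \tfrac n2+k$ vertices. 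If these edges contained no matching of size $k$, the Erd\H{o}s--Gallai theorem (Theorem \ref{Thm:EG59}) would cap their number at $\max\{\binom{2k-1}{2},\binom{k-1}{2}+(k-1)(|G'|-k+1)\}$, which is strictly smaller --- an immediate contradiction, with no case split on $m$ beyond handling $\Delta^{mon}(G)=1$ separately via the Tur\'an number of $F_k$ (Theorem \ref{Thm:EFGS95}). You mention Lemma \ref{Lemma:Countingrt-2-Cor} only as a side tool; making that counting lemma (rather than a bad-pair count over an independent set, or a greedy star construction) the core of the argument is what makes the proof go through.
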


The following result on Tur\'an
number of friendship graphs is well known.

\begin{theorem}[\cite{EFGG95}]\label{Thm:EFGS95}
For every $k\geq 1$ and every $n\geq 50k^2$,
if  a  graph $G$ of  order $n$ satisfies $e(G)>ex(n,F_k)$,
then $G$ contains a copy of a $k$-friendship graph, where
$ex(n,F_k)=\lfloor\frac{n^2}{4}\rfloor+k^2-k$ if $k$
is odd; and $ex(n,F_k)=\lfloor\frac{n^2}{4}\rfloor+k^2-\frac{3k}{2}$
if $k$ is even.
\end{theorem}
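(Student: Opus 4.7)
\medskip
\noindent\textbf{Proof proposal for Theorem \ref{Thm:EFGS95}.}

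The plan is to establish the theorem by two separate arguments: an explicit construction matching the stated extremal numbers (lower bound), and a supersaturation/stability argument exploiting the structure forced by exceeding the Mantel threshold (upper bound).

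For the lower bound, I would start from the balanced complete bipartite graph $T_2(n) = K_{\lfloor n/2\rfloor,\lceil n/2\rceil}$ with parts $A,B$ and add an auxiliary graph $H$ inside the larger part. The key parity split is: for odd $k$, take $H$ to be two vertex-disjoint copies of $K_{k}$, contributing $2\binom{k}{2}=k^2-k$ extra edges; for even $k$, take $H$ to be a disjoint union of a $K_{k}$ and a $K_{k-1}$ with one extra pendant, or an equivalent configuration achieving $k^2-\tfrac{3k}{2}$ extra edges. Let $G_0=T_2(n)\cup H$. I would then verify that $G_0$ is $F_k$-free. A potential center in the opposite bipartition class can only use internal edges of $H$ as triangle-closing edges, so the number of pairwise vertex-disjoint (except at the center) triangles through it equals the matching number of $H$, which by construction of $H$ from two $K_k$'s is at most $2\lfloor k/2\rfloor\le k-1$; a potential center inside the $H$-part is limited because inside a clique $K_k$ every two edges share a vertex, forcing pairwise overlap of triangles using internal edges. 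A short case check handles mixed triangles (two vertices in $A$, one in $B$).

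For the upper bound, I would assume $e(G)>ex(n,F_k)$ and derive a contradiction. Pick a vertex $v$ of maximum degree. By the edge count and Mantel-type supersaturation, $d(v)\ge n/2+\Theta(k)$; indeed, otherwise averaging gives $e(G)\le\lfloor n^2/4\rfloor$, contradicting the hypothesis. Pass to $G':=G[N(v)]$. Triangles through $v$ correspond bijectively to edges of $G'$, and a copy of $F_k$ centered at $v$ corresponds exactly to a matching of size $k$ in $G'$. Contrapositively, if no such $F_k$ exists, then $\nu(G')\le k-1$, and the Erdős-Gallai matching theorem gives
\begin{equation*}
e(G')\le \max\Bigl\{\binom{2k-1}{2},\ \binom{k-1}{2}+(k-1)\bigl(d(v)-k+1\bigr)\Bigr\}.
\end{equation*}
For $n\ge 50k^2$ and $d(v)\approx n/2$, the right-hand side is $O(kn)$, much smaller than what is needed. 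Combining this with the trivial bound $e(G)\le d(v)(n-d(v))+e(G')+\text{(edges not touching }N[v])$ and optimizing over $d(v)$, one recovers an upper bound strictly below $\lfloor n^2/4\rfloor+k^2-k$ (resp.\ $\lfloor n^2/4\rfloor+k^2-\tfrac{3k}{2}$), a contradiction.

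The hard part will be extracting the \emph{exact} parity-dependent constants $k^2-k$ and $k^2-\tfrac{3k}{2}$; the single-vertex Erdős-Gallai argument sketched above suffices to prove $ex(n,F_k)=\tfrac{n^2}{4}+O(k^2)$ with the right order, but not the sharp second-order term. To close the gap I would use a two-step refinement in the spirit of Erdős-Füredi-Gould-Gunderson: first apply a stability result (Andrásfai-Erdős-Sós or Simonovits stability) to conclude that $G$ is very close to bipartite, so that the excess edges localize inside a set of bounded size behaving like the $H$ above; then show that any $F_k$-free graph of this almost-bipartite shape can have at most the stated number of extra edges, again via the Erdős-Gallai bound on graphs with matching number at most $k-1$ (whose extremal structure is different for odd and even bounds, producing the parity split). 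The hypothesis $n\ge 50k^2$ is used precisely to guarantee that the error terms from stability are absorbed, so that the sharp constants survive.
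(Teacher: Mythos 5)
You have written a proof attempt for a statement that the paper does not prove at all: Theorem \ref{Thm:EFGS95} is quoted verbatim from Erd\H{o}s--F\"uredi--Gould--Gunderson \cite{EFGG95} and is used in Section \ref{Sec:4} as a black box, so there is no in-paper proof to compare against. Judged on its own merits, your sketch reproduces the broad shape of the known argument (extremal construction of the form $T_2(n)$ plus a dense graph in one part; Erd\H{o}s--Gallai matching bounds applied to the neighborhood of a high-degree vertex), but it has concrete gaps at exactly the points where the theorem is hard.

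First, the even-$k$ construction is wrong as stated. The graph added inside one part must simultaneously satisfy $\Delta(H)\le k-1$ (else a center inside that part yields $F_k$, since each triangle through such a center consumes a distinct $H$-edge at it) and $\nu(H)\le k-1$ (else a center in the opposite part yields $F_k$). Your candidate $K_k\cup K_{k-1}$ has only $(k-1)^2<k^2-\tfrac{3k}{2}$ edges for even $k\ge 6$, and adding a pendant edge to fix the count raises the matching number to $k$ (the pendant edge plus a perfect matching of $K_{k-1}$ minus its attachment vertex, plus a perfect matching of $K_k$), so the resulting graph actually \emph{contains} $F_k$. The true extremal supplement for even $k$ is a Chv\'atal--Hanson-type graph with $\Delta\le k-1$, $\nu\le k-1$ and $k^2-\tfrac{3k}{2}$ edges, whose components are factor-critical graphs that are not cliques; this parity-sensitive structure is the whole point of the two displayed constants. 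Second, your upper bound has a quantitative error and then defers the real work: averaging over degrees only gives a vertex of degree at least roughly $\tfrac{n}{2}+O(1)$ when $e(G)>\lfloor n^2/4\rfloor+O(k^2)$ and $n\ge 50k^2$, not $\tfrac{n}{2}+\Theta(k)$; and, as you concede yourself, the single-vertex Erd\H{o}s--Gallai argument only yields $ex(n,F_k)=\tfrac{n^2}{4}+O(k^2)$, while the stability step that would extract the sharp constants $k^2-k$ and $k^2-\tfrac{3k}{2}$ is left entirely unspecified. Since that step is precisely the content of the cited theorem, the proposal as it stands is an outline with the essential difficulty unresolved, not a proof.
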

The \emph{matching number} of a graph $G$,
denoted by $\alpha'(G)$, is defined to be the maximum number of pairwise disjoint edges
in $G$. Our proof of Theorem \ref{Thm:Friendshipgraph} uses a
famous result on Tur\'an number of a matching
with given number of edges due to Erd\H{o}s and Gallai \cite{EG59}.
\begin{theorem}[\cite{EG59}]\label{Thm:EG59}
Let $G$ be a graph on $n$ vertices. If $\alpha'(G)\leq k$
then $e(G)\leq \max\{\binom{2k+1}{2}, \binom{n}{2}-\binom{n-k}{2}\}$.
\end{theorem}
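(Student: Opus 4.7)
The plan is to build a ``rainbow link'' graph $H$ on $N(v)$ whose edges are those $ab$ such that $vab$ is a rainbow triangle, and to show that $H$ has a matching of size $k$---equivalently, a rainbow $F_k$ centered at $v$---by invoking the Erd\H{o}s--Gallai matching theorem (Theorem~\ref{Thm:EG59}) on $H$. Lower bounds for $e(H) = rt(G;v)$ will come from Lemmas~\ref{Lemma:Countingrt-1} and~\ref{Lemma:Countingrt-2}.

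First, iteratively remove edges lying on monochromatic $P_4$'s and $C_3$'s to reduce to an edge-minimal $G' \subseteq G$ with $\delta^c(G')\ge n/2 + k - 1$; rainbow triangles in $G'$ remain rainbow triangles in $G$, and since edge-deletions only weaken monochromatic degrees, one can keep $v$ as a maximum-monochromatic-degree vertex of $G'$ by a careful choice of the removal order. Define $H$ on $N_{G'}(v)$ as above; a matching of size $k$ in $H$ is the desired $F_k$. Assume for contradiction that $\alpha'(H)\le k-1$. Since $n\ge 50k^2$ gives $d(v)\ge \delta^c(G')\ge n/2 + k - 1\gg 2k-1$, Theorem~\ref{Thm:EG59} yields
\[
e(H) \ \le\ (k-1)\,d(v) - \binom{k}{2}.
\]

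The key is to beat this upper bound by a lower bound on $rt(G';v)$, chosen from two lemmas according to the value of $d^{mon}(v)$. In the regime of strong one-color domination, when $d^{mon}(v)>d(v)-d^c(v)-\tfrac{k}{2}+1$, I would apply Lemma~\ref{Lemma:Countingrt-2} with $|Y_v|=d^c(v)-1$; the max-monochromatic hypothesis at $v$ makes the bonus term $|Y_v|d^{mon}(v)-\sum_{a\in Y_v}d^{mon}(a)$ nonnegative, and the pair-degree estimate $f(v)\ge 2\delta^c(G')\ge n+2k-2$ yields $rt(G';v)\ge (k-1)\bigl(d^{mon}(v)+d^c(v)-1\bigr)$, which strictly exceeds $(k-1)d(v)-\binom{k}{2}$ throughout this range. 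In the complementary regime of spread-out monochromatic structure, I would invoke Lemma~\ref{Lemma:Countingrt-1}: the color-degree sum contributes $\sum_{a\in N_v}(d^c(v)+d^c(a)-n)\ge (2k-2)d(v)$; the per-color sum decomposes using the no-monochromatic-$P_4$ structure into $\sum_{d_j\ge 2}d_j(d_j-1)\ge \max\{2(d(v)-d^c(v)),\, d^{mon}(v)(d^{mon}(v)-1)\}$; and the max-mono hypothesis bounds the loss $\sum_{d_j=1}(d_j(a_j)-1)\le s_1(d^{mon}(v)-1)$ with $s_1\le d^c(v)-1$. Combined with the nonnegative term $(n-d(v)-1)(d(v)-d^c(v))$, the resulting lower bound again strictly exceeds $(k-1)d(v)-\binom{k}{2}$.

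The hard part is the uniform case analysis in the complementary regime, confirming that in every admissible configuration $(d(v),d^c(v),d^{mon}(v),s_1,s_{\ge 2})$ consistent with $\delta^c\ge n/2+k-1$, edge-minimality, and $n\ge 50k^2$, the lower bound strictly exceeds the Erd\H{o}s--Gallai upper bound; the hypothesis $n\ge 50k^2$ is precisely what provides the slack to absorb the $O(k^2)$ error terms that arise both from the convex estimate on $\sum d_j(d_j-1)$ and from the discretization loss $\binom{k}{2}$ in Theorem~\ref{Thm:EG59}. Once the contradiction $rt(G';v)>(k-1)d(v)-\binom{k}{2}$ is established, Theorem~\ref{Thm:EG59} forces $\alpha'(H)\ge k$, producing the desired $k$ rainbow triangles sharing only $v$ as center.
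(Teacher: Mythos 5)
Your proposal does not prove the statement at hand. The statement is the Erd\H{o}s--Gallai matching theorem itself: for any graph $G$ on $n$ vertices with $\alpha'(G)\leq k$, one has $e(G)\leq \max\left\{\binom{2k+1}{2}, \binom{n}{2}-\binom{n-k}{2}\right\}$. This is a purely extremal statement about uncolored graphs, with extremal configurations $K_{2k+1}$ together with isolated vertices, and $K_k$ joined completely to an independent set on $n-k$ vertices. In the paper this result is quoted from the 1959 Erd\H{o}s--Gallai article as a known tool and is not reproved; a self-contained proof would proceed, for instance, by induction on $n$ (removing a vertex of a maximum matching and analyzing whether the rest has a smaller matching number), by the Berge--Tutte formula, or by a shifting/compression argument --- none of which appears in your write-up.

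What you actually sketched is a proof of Theorem~\ref{Thm:Friendshipgraph} (the rainbow friendship-subgraph result), in which you \emph{invoke} Theorem~\ref{Thm:EG59} as a black box to bound $e(H)$ for the link graph $H$ on $N(v)$. That makes your argument circular relative to the assignment: the very inequality you were asked to establish is assumed at the step ``Theorem~\ref{Thm:EG59} yields $e(H)\leq (k-1)d(v)-\binom{k}{2}$.'' Nothing in your proposal engages with why a graph of bounded matching number cannot have many edges --- no use of maximum matchings, augmenting paths, vertex covers, or the Gallai--Edmonds structure. As a proof of the stated theorem, it is therefore not a partial or flawed attempt but an attempt at a different result entirely; the content you did write tracks the paper's Section~4 argument (minimum color degree plus Lemma~\ref{Lemma:Countingrt-5} versus the Erd\H{o}s--Gallai bound), but that is the consumer of the statement, not its proof.
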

We also need a special case of Lemma \ref{Lemma:Countingrt-5}.

\begin{lemma}\label{Lemma:Countingrt-5}
Let $(G,C)$ be an edge-colored graph on $n$ vertices with $\delta^c(G)$
such that $e(G)$ is minimal.
Then for a vertex $v\in V(G)$ with $d^{mon}_G(v)=\triangle^{mon}(G)$,
we have
\begin{align*}
rt(G;v)\geq \frac{1}{2}\left(\Delta^{mon}(G)+d^c_G(v)-1)(\delta^c(G)+d^c_G(v)-n)\right).
\end{align*}
\end{lemma}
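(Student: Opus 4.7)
The plan is to derive this bound as a direct specialisation of Lemma \ref{Lemma:Countingrt-2}. Concretely, I would take $|Y_v|$ as large as possible, namely $|Y_v|=d^c_G(v)-1$; this corresponds to picking one representative from every colour class at $v$ other than the heaviest one (which supplies $X_v$). With this choice $|Y_v|+1=d^c_G(v)$, so
\[
f(v)=\min\{d^c(u)+|Y_v|+1 : u\in X_v\cup Y_v\}\geq \delta^c(G)+d^c_G(v),
\]
because $d^c(u)\geq \delta^c(G)$ for every $u\in V(G)$. Hence $f(v)-n\geq \delta^c(G)+d^c_G(v)-n$, and the coefficient $d^{mon}_G(v)+|Y_v|$ appearing in Lemma \ref{Lemma:Countingrt-2} equals $\Delta^{mon}(G)+d^c_G(v)-1$.

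Next I would control the ``correction'' term $|Y_v|d^{mon}_G(v)-\sum_{a\in Y_v}d^{mon}_G(a)$. Here the hypothesis $d^{mon}_G(v)=\Delta^{mon}(G)$ is used in an essential way: since $d^{mon}_G(a)\leq \Delta^{mon}(G)=d^{mon}_G(v)$ for every $a\in Y_v$, we immediately obtain
\[
\sum_{a\in Y_v}d^{mon}_G(a)\leq |Y_v|\, d^{mon}_G(v),
\]
so this correction is nonnegative and can simply be discarded. Substituting the two bounds into the inequality of Lemma \ref{Lemma:Countingrt-2} yields the claimed lower bound on $rt(G;v)$.

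The only subtlety, which is not really an obstacle but is the point that makes the argument go through, is recognising that the choice of $v$ with maximum monochromatic degree is exactly what renders the otherwise sign-indeterminate difference $|Y_v|d^{mon}_G(v)-\sum_{a\in Y_v}d^{mon}_G(a)$ nonnegative. Without this assumption on $v$ one would have to appeal to the averaging trick used in Proposition \ref{Prop:2}, which compares a sum over a prefix to a sum over the top $\delta^c(G)$ vertices and hence is unavailable when bounding $rt(G;v)$ for a single vertex. Beyond this observation, the argument is a routine bookkeeping specialisation of the digraph double-counting already carried out in the proof of Lemma \ref{Lemma:Countingrt-2}.
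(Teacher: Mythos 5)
Your proposal is correct and follows exactly the paper's argument: specialise Lemma \ref{Lemma:Countingrt-2} with $|Y_v|=d^c_G(v)-1$, bound $f(v)\geq \delta^c(G)+d^c_G(v)$ via $d^c(u)\geq\delta^c(G)$, and discard the correction term $|Y_v|d^{mon}_G(v)-\sum_{a\in Y_v}d^{mon}_G(a)\geq 0$, which is nonnegative precisely because $d^{mon}_G(v)=\Delta^{mon}(G)$. Your added remark explaining why the maximality of $d^{mon}_G(v)$ is what makes this term sign-definite is a correct and welcome clarification of a step the paper states without comment.
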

\begin{proof}
Putting the vertex $v$ as one with $d^{mon}(v)=\Delta^{mon}(G)$,
and $Y_v\subset N(v)\backslash X_v$\footnote{Recall the definition of $X_v$ above the statement of Lemma \ref{Lemma:Countingrt-2}.}
(such that for each $u,u'\in Y_v$, we have $C(uv)\neq C(u'v)$)
and $|Y_v|=d^c(v)-1$ in Lemma \ref{Lemma:Countingrt-2},
from the fact $$|Y_v|d^{mon}(v)-\sum_{a\in Y_v}d^{mon}(a)\geq 0,$$
we obtain the lemma.
\end{proof}

\noindent
{\bf Proof of Theorem \ref{Thm:Friendshipgraph}.}
Without loss of generality, assume that $G$ is edge-minimal
subject to the condition $\delta^c\geq \frac{n}{2}+k-1$.
We prove the theorem by contradiction.
Choose $v\in V(G)$ such that $d^{mon}(v)=\Delta^{mon}(G)$.

If $\Delta^{mon}(G)=1$, then $G$ is properly-colored.
Note that $e(G)\geq \frac{\delta^cn}{2}\geq \frac{n^2}{4}+\frac{kn}{2}-\frac{n}{2},$
and $ex(n,F_{k})\leq \left\lfloor\frac{n^2}{4}\right\rfloor+k^2-\frac{3k}{2}$
when $n\geq 50k^2$ by Theorem \ref{Thm:EFGS95}.
When $n\geq 50k^2$, we have $$\frac{n^2}{4}+\frac{kn}{2}-\frac{n}{2}>\left\lfloor\frac{n^2}{4}\right\rfloor+k^2-\frac{3k}{2}$$
(recall $k\geq 2$), and so $G$ contains a properly-colored $F_k$, and hence $k$ rainbow triangles sharing
one common vertex. Next we assume that $\Delta^{mon}(G)\geq 2$.

By Lemma \ref{Lemma:Countingrt-5},
\begin{align}\label{aligh-Theorem14-1}
rt(G;v)&\geq \frac{1}{2}\left((d^{mon}(v)+d^c(v)-1)(\delta^c+d^c(v)-n)\right)\geq (k-1)(d^{mon}(v)+d^c(v)-1).
\end{align}
Consider the graph $G'=G[X_v\cup Y_v]$. Then $|G'|=d^{mon}(v)+d^c(v)-1\geq \frac{n}{2}+k$.
Notice that each edge in $G'$ corresponds to a rainbow triangle through the vertex $v$.
From (\ref{aligh-Theorem14-1}), we have that
\begin{align}\label{aligh-Theorem14-2}
e(G')\geq (k-1)(d^{mon}(v)+d^c(v)-1)\geq \frac{(k-1)(n+2k)}{2}.
\end{align}

Since $G$ contains no $k$ rainbow triangles sharing one common vertex,
$G'$ contains no matching of size $k$.
That is, $\alpha'(G')\leq k-1$. So by Theorem \ref{Thm:EG59},
\begin{align}\label{aligh-Theorem14-3}
e(G')\leq \max\left\{\binom{2k-1}{2},\binom{k-1}{2}+(k-1)(|G'|-k+1)\right\}.
\end{align}
By simple algebra, we have $\binom{2k-1}{2}<\frac{(k-1)(n+2k)}{2}$ when $n\geq 2k-3$.
Furthermore,
\begin{align*}
&(k-1)(d^{mon}(v)+d^c(v)-1)-\binom{k-1}{2}-(k-1)(|G'|-k+1)\\
&\geq -\binom{k-1}{2}+(k-1)^2>0
\end{align*}
Thus, (\ref{aligh-Theorem14-2}) contradicts (\ref{aligh-Theorem14-3})
since $n\geq 2k-3$. The proof is complete. $\hfill\blacksquare$

\section{Concluding remarks}
In this paper, we present a tight color degree condition (up to a constant) for $k$
rainbow triangles sharing one common vertex (when $k$ is a fixed integer),
and highly suspect the tight one is $\frac{n+1}{2}$ for $n=\Omega(k^2)$
(by considering Theorem \ref{Thm:EFGS95}).

Erd\H{o}s et al. \cite{EFGG95} conjectured Theorem \ref{Thm:EFGS95} holds
for $n\geq 4k$. If the answer to this conjecture is positive, then
Theorem \ref{Thm:FriendshipSub} can be improved to all graphs with order $n\geq 4k$.
On the other hand, maybe an answer to the following is positive.
\begin{prob}
Let $n,k$ be two positive integers. Let $(G,C)$ be an edge-colored
graph on $n$ vertices with $\delta^c(G)\geq \frac{n+1}{2}$. Does there exist
a constant $c$, such that if $n\geq ck$ then $G$ contains
a properly-colored $F_k$?
\end{prob}

Recall that $f(n):=\min\{rt(G):G\in \mathcal{G}^{*}_n\}$ (see Section \ref{Sec:1}).
We conclude this paper with the following more feasible problem.

\begin{prob}
Determine the value of $\lim\limits_{n\rightarrow \infty}\frac{f(n)}{n^2}$.
\end{prob}

\noindent {\bf Acknowledgements.}
The second author thanks Xiaozheng Chen and Ruonan Li for
checking the proof of Lemma \ref{Lemma:Countingrt-1}.
Xueliang Li was supported by NSFC (Nos.\
11871034 and 12131013). Bo Ning was supported by
NSFC (Nos.\ 11771141, 11971346). Yongtang Shi was partially
supported by the NSFC (No.\ 11922112), Natural Science
Foundation of Tianjin (Nos. 20JCZDJC00840, 20JCJQJC00090).
Shenggui Zhang was partially supported by NSFC (Nos. 11671320,
12071370, 12131013, U1803263).

\end{document}